\pgfplotsset{compat=1.18}
\tikzstyle{black vertex}=[scale=0.6, fill=black, draw=black, shape=circle]
\tikzstyle{white vertex}=[scale=0.6, fill=white, draw=black, shape=circle]
\tikzstyle{blue vertex}=[scale=0.7, fill=blue, draw=blue, shape=circle]
\tikzstyle{red vertex}=[scale=0.7, fill=red, draw=red, shape=circle]
\tikzstyle{none}=[fill=none, draw=none, shape=circle]
\newcommand{\chimu}{\chi_{\mu}}
\DeclareMathOperator{\cp}{\,\square\,}
\newcommand{\diam}{{\rm diam}}
\newcommand{\rad}{{\rm rad}}
	\newtheorem{theorem}{Theorem}[section]
	\newtheorem{lemma}[theorem]{Lemma}
	\newtheorem{corollary}[theorem]{Corollary}
	\newtheorem{proposition}[theorem]{Proposition}
	\newtheorem{problem}[theorem]{Problem}
	\newtheorem{question}[theorem]{Question}
	\theoremstyle{definition}
        \newtheorem{claim-proof}{Claim}
\begin{document}
	
\title{Coloring the vertices of a graph with \\ mutual-visibility property}

\author{
Sandi Klav\v{z}ar $^{a,b,c}$ \\
\texttt{\footnotesize sandi.klavzar@fmf.uni-lj.si}  \\
{\footnotesize ORCID: 0000-0002-1556-4744}
\and
Dorota Kuziak $^{d}$ \\
\texttt{\footnotesize dorota.kuziak@uca.es} \\
{\footnotesize ORCID: 0000-0001-9660-3284}
\and
Juan Carlos Valenzuela Tripodoro $^{e}$ \\
\texttt{\footnotesize jcarlos.valenzuela@uca.es} \\
{\footnotesize ORCID: 0000-0002-6830-492X}
\and
Ismael G. Yero $^{e}$ \\
\texttt{\footnotesize ismael.gonzalez@uca.es} \\
{\footnotesize ORCID: 0000-0002-1619-1572}
}
\maketitle

\begin{center}
$^a$ Faculty of Mathematics and Physics, University of Ljubljana, Slovenia\\

$^b$ Institute of Mathematics, Physics and Mechanics, Ljubljana, Slovenia\\

$^c$ Faculty of Natural Sciences and Mathematics, University of Maribor, Slovenia\\

$^d$ Departamento de Estad\'istica e IO, Universidad de C\'adiz, Algeciras Campus, Spain\\

$^e$ Departamento de Matem\'aticas, Universidad de C\'adiz, Algeciras Campus, Spain
\end{center}

\maketitle
	
\begin{abstract}
%This article combines two contrasted graph theory topics. In one hand, the notion of coloring the vertices of a graph satisfying certain properties, which is a classical area in graph theory. In the second one, the notion of mutual-visibility between pairs of vertices, which is yet a fresh topic, but already an established and hot one due to several connections with other classical combinatorial topics. 
Given a graph $G$, a mutual-visibility coloring of $G$ is introduced as follows. We color two vertices $x,y\in V(G)$ with a same color, if there is a shortest $x,y$-path whose internal vertices have different colors than $x,y$. The smallest number of colors needed in a mutual-visibility coloring of $G$ is the mutual-visibility chromatic number of $G$, which is denoted $\chimu(G)$. Relationships between $\chimu(G)$ and its two parent ones, the chromatic number and the  mutual-visibility number, are presented. Graphs of diameter two are considered, and in particular the asymptotic growth of the mutual-visibility number of the Cartesian product of complete graphs is determined. A greedy algorithm that finds a mutual-visibility coloring is designed and several possible scenarios on its efficiency are discussed. Several bounds are given in terms of other graph parameters such as the diameter, the order, the maximum degree, the degree of regularity of regular graphs, and/or the mutual-visibility number. For the corona products it is proved that the value of its mutual-visibility chromatic number depends on that of the first factor of the product. Graphs $G$ for which $\chimu(G)=2$ are also considered.
\end{abstract}

\noindent
{\bf Keywords:} graph coloring; mutual-visibility set; mutual-visibility number; mutual-visibility chromatic number; graph product; diameter two graph

\noindent
AMS Subj.\ Class.\ (2020): 05C12, 05C15, 05C69

\section{Introduction}

Given a connected graph $G = (V(G),E(G))$ and a set of vertices $S\subseteq V(G)$, it is said that two vertices $x,y\in S$ are $S$-\textit{visible} if there is a shortest $x,y$-path $P$ such that
$V(P)\cap S=\{x,y\}$. The set $S$ is called a \textit{mutual-visibility set} of $G$ if any two vertices of $S$
are $S$-visible. The cardinality of a largest mutual-visibility set of $G$ is the \textit{mutual-visibility number} of $G$, denoted by $\mu(G)$. By a {\em $\mu$-set} of $G$ we mean a mutual-visibility set of $G$ with cardinality $\mu(G)$.

The concepts described above were introduced and studied for the first time by Di Stefano in 2022 in~\cite{DiStefano} motivated in part by some model of robot navigation in networks avoiding collisions between themselves. Soon after, the variety of  mutual-visibility problems in graphs consisting of (total, dual, outer) mutual-visibility sets was presented in~\cite{variety-2023}. In just a few years, the area has blossomed, not only because of the original computer science motivation on robot visibility, but perhaps above all for the reason, that the visibility problems are intrinsically connected with several classical combinatorial problems, as for instance the Zarankiewicz problem, see~\cite{Cicerone-2023}, and Tur\'an type problems, see~\cite{boruzanli-2024, Bujtas, Cicerone-2024b}. Additional interesting contributions to the problem are~\cite{Axenovich-2024, Bresar, Cicerone-hyper, Cicerone-hered, Cicerone-2024a, Korze-2024a, Korze-2024b, kuziak-2023, tian-2023+}.

When we look at the mutual-visibility problem from a practical point of view, we might need more than just one mutual-visibility set, instead we wish to partition the vertex set of a graph into mutual-visibility sets. Therefore, we present here a new research direction on the mutual-visibility problem as follows. Given the connected graph $G$, we color its
vertices by using the following rule. Two vertices $x,y\in V(G)$ are colored with a same color, if there exists a shortest $x,y$-path
whose internal vertices have different colors than $x,y$. Clearly, if $x,y$ are adjacent, then they can be colored equal. Such coloring
shall be called a \textit{mutual-visibility coloring} of $G$. The smallest number of colors needed in a mutual-visibility coloring
of $G$ is the \textit{mutual-visibility chromatic number} of $G$, and will be denoted by $\chimu(G)$. In order to justify the terminology,
notice that in a mutual-visibility coloring, each color class forms a mutual-visibility set.

The rest of the article is structured as follows. Section \ref{sec:prelim} contains some preliminary first results, including a lower bound of $\chimu(G)$ as a function of the mutual-visibility number of the graph, as well as, a relationship with the classical chromatic number for graphs of diameter two. In Section \ref{sec:diameter2}, we study the case of graphs with diameter two. Particularly, the asymptotic growth of the mutual-visibility chromatic number of the Cartesian product of complete graphs is established. A greedy algorithm will be described in Section \ref{sec:greedy}. It will be proved there that although the
algorithm is optimal in many cases, it can also return arbitrarily bad results as a consequence of an erroneous choice of the mutual-visibility sets at each stage of the algorithm. Section \ref{sec:bounds} describes
several bounds as a function of various structural parameters such as the diameter, the order, the maximum degree, the degree of regularity of regular graphs, and/or the mutual-visibility number. A realizability result is also proved, showing that for every feasible value, there is a graph that attains it as a mutual-visibility chromatic number. This section also contains a Nordhauss-Gaddum type upper bound for the mutual-visibility chromatic number. Section \ref{sec:corona} explores the corona product of two graphs $G$ and $H$, by showing that its mutual-visibility chromatic number is either the mutual-visibility chromatic number of the first factor of the product, or that one plus one. Section \ref{sec:chimu2} is focused on finding all those graphs $G$ for which $\chimu(G)=2$, since this is the smallest possible value that such parameter can achieve, and which means that $V(G)$ can be partitioned into two mutual-visibility sets. Finally, we close this work with Section \ref{sec:conclud}, where we outline some remarks and open problems that might be of interest to continue this investigation.

%%%%%%%%%%%%%%%%%%%%%%%%%%%%%%%%%%%%%%%%%%%
\section{Preliminaries}
\label{sec:prelim}
%%%%%%%%%%%%%%%%%%%%%%%%%%%%%%%%%%%%%%%%%%%

Along our whole exposition, for a positive integer $k$ we shall write $[k]=\{1,\dots, k\}$. Let $G$ be a graph and let $G_i$, $i\in [k]$, be its connected components. Then it is evident that
$$\chimu(G) = \sum_{i=1}^k \chimu(G_i)\,.$$
For this reason, from now on all the graphs considered are connected unless stated otherwise. Also, all our graphs have neither loops nor multiple edges.

A first basic connection between $\chimu(G)$ and $\mu(G)$ is as follows. Let $\mathcal{P}=\{ P_i:\ i\in \left[\chimu(G)\right] \}$ be a partition of $V(G)$ into mutual-visibility sets. Clearly, $|P_i|\le \mu(G)$ for all $i \in \left[\chimu(G)\right]$. This already implies the following lower bound which we state as a lemma for subsequent use.

\begin{lemma}
    \label{lem:simple-lower-bound}
If $G$ is a graph, then $$\chimu(G) \ge \left\lceil\frac{n(G)}{\mu(G)}\right\rceil\,.$$
\end{lemma}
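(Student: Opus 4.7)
The plan is to follow the pigeonhole observation already hinted at in the paragraph preceding the lemma. First I would fix an optimal mutual-visibility coloring of $G$, i.e., one using exactly $\chimu(G)$ colors, and let $\mathcal{P}=\{P_1,\dots,P_{\chimu(G)}\}$ be the resulting partition of $V(G)$ into color classes.

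The key step is to argue that each $P_i$ is itself a mutual-visibility set of $G$. This is essentially immediate from the definition: for any two vertices $x,y\in P_i$, the mutual-visibility coloring guarantees a shortest $x,y$-path whose internal vertices have colors different from the common color of $x,y$; in particular none of those internal vertices lies in $P_i$, which is exactly the condition for $x,y$ to be $P_i$-visible.

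From this it follows that $|P_i|\le \mu(G)$ for every $i\in[\chimu(G)]$, so that
$$n(G) \;=\; \sum_{i=1}^{\chimu(G)} |P_i| \;\le\; \chimu(G)\cdot \mu(G),$$
which rearranges to $\chimu(G)\ge n(G)/\mu(G)$. Since $\chimu(G)$ is an integer, the ceiling can be added on the right-hand side, yielding the claimed inequality.

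There is no real obstacle here; the only thing to be slightly careful about is to state clearly why a color class in a mutual-visibility coloring is a mutual-visibility set, since that is the bridge between the coloring definition and the parameter $\mu(G)$. Everything else is a one-line counting argument followed by taking the ceiling.
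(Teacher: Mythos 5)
Your argument is correct and is exactly the one the paper uses: the color classes of an optimal mutual-visibility coloring partition $V(G)$ into mutual-visibility sets, each of size at most $\mu(G)$, and the bound follows by counting and taking the ceiling. Your extra remark justifying why a color class is a mutual-visibility set is a welcome clarification of a step the paper leaves implicit, but it is not a different approach.
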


On the other hand, the following arguments justify the term chromatic in our definition and relate our investigation with the classical one of vertex coloring. A \textit{proper coloring} of the vertex set of a graph $G$ is an assignment of labels (or colors) to the vertices of $G$ in such a way that each two adjacent vertices have different labels (colors). The \textit{chromatic number} of $G$, denoted $\chi(G)$, represents the smallest number of colors among all possible proper colorings of $G$. This parameter is one of the classical ones in graph theory, and there are lots of variations of it. For more information on this fact, see, for instance, the book~\cite{jensen-1995}.

It can be observed that a proper coloring of an arbitrary graph $G$ does not induce a mutual-visibility coloring in general. However, if we consider graphs of diameter two, then such relationship becomes true. That is, let $G$ be a connected graph of diameter two. Since a set of vertices $S$ having a same color in any proper coloring $G$ forms an independent set (it induces an edgeless graph), we hence deduce that such $S$ must be a mutual-visibility set of $G$. Thus, the partition of $V(G)$ induced by the proper coloring, represents also a partition into mutual-visibility sets for $G$. This leads to the following result.

\begin{proposition}
\label{pro:chimu-chi}
If $G$ is a graph of diameter two, then $\chimu(G)\le \chi(G)$.
Moreover, this bound is tight.
\end{proposition}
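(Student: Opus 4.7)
The plan is to give a direct proof of the inequality that formalizes the discussion preceding the statement, and then to certify tightness by a concrete small example. First, I would take an optimal proper coloring $c$ of $G$ using $\chi(G)$ colors and verify that $c$ is simultaneously a mutual-visibility coloring. Let $x,y$ be any pair of vertices with $c(x)=c(y)$. If $xy\in E(G)$, the edge itself is a shortest $x,y$-path with no internal vertices, so the visibility condition is vacuously satisfied. Otherwise $xy\notin E(G)$, and the hypothesis $\diam(G)=2$ forces $d_G(x,y)=2$. There is then a common neighbor $z$ of $x$ and $y$; since $c$ is proper and $xz,zy\in E(G)$, we have $c(z)\neq c(x)=c(y)$, so the shortest path $xzy$ satisfies the visibility requirement. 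This proves $\chimu(G)\le \chi(G)$.

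For the tightness part, I would exhibit a small family of bipartite diameter-two graphs, for instance $P_3$, $C_4$, or the complete bipartite graphs $K_{m,n}$ with $m,n\ge 1$ and $m+n\ge 3$. For each such graph $G$ we have $\chi(G)=2$, and since $G$ contains at least one pair of non-adjacent vertices, a single color cannot work: any shortest path between two non-adjacent monochromatic vertices would then have monochromatic interior, violating the mutual-visibility condition. Hence $\chimu(G)\ge 2$, and combined with the bound just established we obtain $\chimu(G)=\chi(G)=2$.

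I do not foresee any real obstacle. The entire content of the inequality is the single observation that, under the diameter-two assumption, the internal vertex $z$ on a length-two shortest path between two vertices of an independent set cannot itself belong to that set, and is therefore colored differently. The only subtlety worth a sentence of care is the vacuous adjacent case, which the definition of mutual-visibility coloring handles automatically.
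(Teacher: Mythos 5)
Your argument is correct and is essentially the same as the paper's: the paper also observes that each color class of a proper coloring is an independent set, hence (by the diameter-two hypothesis) a mutual-visibility set, and certifies tightness with the complete bipartite graphs $K_{r,t}$, $(r,t)\ne(1,1)$. Your explicit verification that the common neighbor $z$ receives a different color, and your justification that $\chimu\ge 2$ for non-complete graphs, just spell out details the paper leaves implicit.
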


\begin{proof}
The bound clearly follows from the previous comments. To see that it is tight, consider the complete bipartite graphs $K_{r,t}$ with $(r,t)\ne (1,1)$, for which $\chimu(K_{r,t})=2= \chi(K_{r,t})$.
\end{proof}

We close this section with some other extra terminology and notation that shall be further used.

Let $G$ be a graph. The \textit{maximum degree} of $G$ is denoted by $\Delta(G)$, and its order by $n(G)$. Given a vertex $v\in V(G)$, the \textit{open neighborhood} of $v$ is denoted by $N_G(v)$. If $X\subseteq V(G)$ and $F\subseteq E(G)$, then the subgraphs induced by $X$ and by $F$ are respectively denoted by $G[X]$ and $G[F]$.  The \textit{complement} of $G$ is denoted by $\overline{G}$.

The \textit{distance} between vertices $u,v\in V(G)$ is denoted by $d_G(u,v)$. The \textit{diameter} $\diam(G)$ of $G$ is the maximum distance bwteen its vertices.
The graph $G$ is {\em geodetic} if each pair of its vertices is connected by a unique shortest path. A subgraph $H$ of $G$ is a {\em geodetic subgraph} if it has the same property, that is, each pair of vertices of $H$ is connected in $G$ by a unique shortest path. A subgraph $H$ of $G$ is a {\em convex subgraph} if for any vertices $x$ and $y$ of $H$, every shortest $x,y$-path in $G$ lies completely in $H$.

The {\em Cartesian product} $G\cp H$ and the {\em strong product} $G\boxtimes H$ of graphs $G$ and $H$ both have the vertex set $V(G)\times V(H)$. The vertices $(g, h$) and $(g', h')$ are adjacent in $G\cp H$ if either $gg'\in E(G)$ and $h = h'$, or $g = g'$ and $hh'\in E(H)$. By a {\em layer} of $G\cp H$ we mean a subgraph induced by all the vertices in which one coordinate is fixed. Note that a layer is either isomorphic to $G$ or to $H$. The vertices $(g, h$) and $(g', h')$ are adjacent in $G\boxtimes H$ if either one of the two condition for the Cartesian product holds, or $gg'\in E(G)$ and $hh'\in E(H)$.

%%%%%%%%%%%%%%%%%%%%%%%%%%%%%%%%%%%%%%%%%%%
\section{Diameter two graphs}
\label{sec:diameter2}
%%%%%%%%%%%%%%%%%%%%%%%%%%%%%%%%%%%%%%%%%%%

In order to continue the flow initiated in Section \ref{sec:prelim} with graphs of diameter two, we next consider this class of graphs with a bit more detail. In the main result we determine the asymptotic growth of $\chimu(K_n\cp K_n)$, before that we prove the following.

\begin{theorem}
\label{thm:diam-2-C4}
If $G$ is a graph with $\diam(G) = 2$ and each pair of vertices at distance two lies in a $4$-cycle, then $\chimu(G) = \mathcal{O}(\sqrt{\Delta(G)})$.
\end{theorem}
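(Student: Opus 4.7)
My plan is a probabilistic coloring argument. Set $k=\lceil c\sqrt{\Delta(G)}\,\rceil$ for a constant $c>0$, and color each vertex of $G$ independently and uniformly at random from $[k]$. A mutual-visibility coloring fails only on pairs $\{u,v\}$ at distance $2$ whose shortest $u,v$-paths are internally monochromatic with the color of $u$ and $v$; since $G$ has diameter $2$, those internal vertices are exactly the common neighbors of $u$ and $v$. The task therefore reduces to avoiding, for every distance-$2$ pair, the bad event
\[
B_{uv}=\big\{\,u,\,v,\text{ and every vertex of }N_G(u)\cap N_G(v)\text{ receive the same color}\,\big\}.
\]

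The first step is to estimate $\Pr[B_{uv}]$. The $4$-cycle hypothesis gives $|N_G(u)\cap N_G(v)|\ge 2$, so $B_{uv}$ forces at least four vertices to agree on one color, yielding $\Pr[B_{uv}]\le k\cdot k^{-4}=k^{-3}$. The second step is to quantify the dependency structure of the family $\{B_{uv}\}$: each event has support $\{u,v\}\cup (N_G(u)\cap N_G(v))$ of size at most $\Delta(G)+2$, and by the Moore bound $n(G)\le \Delta(G)^2+1$ a fixed vertex lies in $O(\Delta(G)^2)$ such events (either as one of $u,v$, accounting for the $\le \Delta(G)(\Delta(G)-1)$ vertices at distance $2$ from it, or as a common neighbor of a non-adjacent pair inside its own neighborhood, of which there are at most $\binom{\Delta(G)}{2}$). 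The third step is to convert the probabilistic estimate into a valid coloring, naturally via the Lov\'asz Local Lemma or an alteration argument.

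The main obstacle is precisely this last step. The standard LLL with $\Pr[B_{uv}]\le k^{-3}$ and dependency degree $D=O(\Delta(G)^3)$ requires $k^3\ge e(D+1)$, giving only $k=O(\Delta(G))$, short of the claimed $\sqrt{\Delta(G)}$ by a factor of $\sqrt{\Delta(G)}$. A naive union bound is even weaker, yielding $k\gtrsim\Delta(G)^{4/3}$. To close the gap I would split the pairs according to $t=|N_G(u)\cap N_G(v)|$: fat pairs with large $t$ contribute $\Pr[B_{uv}]\le k^{-(t+1)}$, decaying exponentially in $t$, so their total mass is negligible; thin pairs with exactly $t=2$ — the tight case modeled by $G=K_n\cp K_n$ — form the real bottleneck. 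For these I would either apply a weighted/asymmetric LLL that exploits the actual edge size of each bad event, or, in the spirit of the eventual application to $K_n\cp K_n$, precompose the random coloring with a structured ``design-like'' partition of $V(G)$ into $O(\sqrt{\Delta(G)})$ buckets coming from an affine or projective plane of order $\approx\sqrt{\Delta(G)}$, so that thin pairs are automatically separated. A final local repair, costing only $O(1)$ extra colors absorbed into the constant $c$, should close the argument and give the bound $\chimu(G)=\mathcal{O}(\sqrt{\Delta(G)})$.
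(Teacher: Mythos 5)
Your reduction of the problem to avoiding the events $B_{uv}$ is sound, and you have correctly identified the bottleneck, but the proposal does not prove the theorem, and your own accounting shows why. With $k=\Theta\bigl(\sqrt{\Delta(G)}\bigr)$ colors, a ``thin'' pair with exactly two common neighbors is bad with probability about $k^{-3}=\Theta\bigl(\Delta(G)^{-3/2}\bigr)$, while a single vertex can lie in $\Theta\bigl(\Delta(G)^{2}\bigr)$ such pairs --- this is exactly the situation in $K_n\cp K_n$, the extremal example. Hence the expected number of bad pairs meeting a fixed vertex is $\Theta\bigl(\sqrt{\Delta(G)}\bigr)\to\infty$: a uniformly random coloring with $\mathcal{O}\bigl(\sqrt{\Delta(G)}\bigr)$ colors is not merely hard to certify, it is typically very far from a mutual-visibility coloring. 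No form of the Local Lemma (symmetric, asymmetric, or weighted --- the event probabilities and dependency degrees are what they are) and no alteration or ``local repair costing $\mathcal{O}(1)$ extra colors'' can bridge that; your fat/thin split does not help because the thin pairs are the entire difficulty. The one viable germ in your last paragraph --- precomposing with a structured, design-like partition --- is left unexecuted, and it is unclear how to extract such a design from an arbitrary graph satisfying the hypothesis rather than from $K_n\cp K_n$ specifically.

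The missing idea is deterministic and structural. The paper's proof invokes the theorem of Kang and Perarnau that the edge set of any graph can be partitioned into $\mathcal{O}\bigl(\sqrt{\Delta(G)}\bigr)$ classes $E_1,\ldots,E_\ell$, none of which contains a $C_4$ as a subgraph. It then assigns to each vertex the color $\min\{i : v \in V(G[E_i])\}$, so that each color class induces a $C_4$-free subgraph. For two same-colored vertices $x,y$ at distance two, the hypothesis places them on a $4$-cycle, which cannot lie entirely inside their ($C_4$-free) color class, so some internal vertex of that cycle has a different color and $x,y$ are visible; adjacent same-colored vertices are trivially visible. This replaces your probabilistic machinery with a single citation plus an elementary verification, and it is precisely the ``structured partition'' your proposal gestures at but does not construct. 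If you want to salvage a probabilistic route, the randomness must go into producing a $C_4$-free edge decomposition (which is essentially how the Kang--Perarnau result is proved), not into coloring vertices independently.
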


\begin{proof}
Consider an arbitrary partition ${\cal E} = \{E_1, \ldots, E_\ell\}$ of $E(G)$ such that no part of it contains $C_4$ as a subgraph. Based on ${\cal E}$, we form a partition ${\mathcal V} = \{V_1, \ldots, V_\ell\}$ of $V(G)$ as follows. Let $v\in V(G)$ and let $v$ lies in $G[E_i]$, $i\in I$. Then we put $v$ into $V_k$, where $k = \min_{i\in I} i$. Note that some of the sets $V_i$ might be empty, and let $j$ be the largest index such that $V_j\ne \emptyset$. Assume ${\mathcal V}' = \{V_1, \ldots, V_j\}$. Then ${\mathcal V}'$ is a partition of $V(G)$. Note also that since $G[E_i]$, $i\in [j]$, contains no $C_4$, the same holds for $G[V_i]$.

We claim that ${\mathcal V}'$ forms a mutual-visibility coloring. For this sake consider a nonempty part $V_i\in {\mathcal V}$  and let $x,y\in V_i$. If $xy\in E(G)$, then they are $V_i$-visible. Assume next that $d_G(x,y) = 2$. Then by our assumption, $x$ and $y$ lie in a $C_4$. But now at least one vertex of this $C_4$ does not belong to $V_i$, which in turn implies that $x$ and $y$ are again $V_i$-visible. Hence $V_i$ is a mutual-visibility set for each $i\in [j]$ and thus ${\mathcal V}'$ forms a mutual-visibility coloring.

To complete our argument, we recall from~\cite[Theorem~1]{kang-2015} that every graph $G$ admits a decomposition of $E(G)$ into $\mathcal{O}(\sqrt{\Delta(G)})$ parts, such that none of them contains $C_4$ as a subgraph.
\end{proof}

The next result implies that the upper bound of Theorem~\ref{thm:diam-2-C4} is asymptotically tight.

\begin{theorem}
\label{thm:Hamming}
If $n$ is large enough, then $\chimu(K_n\cp K_n) = \Theta (\sqrt{n})$.
\end{theorem}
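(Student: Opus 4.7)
The plan is to establish matching upper and lower bounds, each of order $\sqrt{n}$, using Theorem~\ref{thm:diam-2-C4} for the upper bound and Lemma~\ref{lem:simple-lower-bound} combined with a Zarankiewicz-type estimate for the lower bound.

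For the upper bound, I would first observe that $K_n\cp K_n$ meets the hypotheses of Theorem~\ref{thm:diam-2-C4}. Its vertex set is $[n]\times[n]$ and two vertices are adjacent precisely when they share exactly one coordinate. Any two distinct vertices $(a,b),(c,d)$ with $a\ne c$ and $b\ne d$ are at distance $2$, and the four vertices $(a,b),(a,d),(c,d),(c,b)$ form a $C_4$ containing them. Hence $\diam(K_n\cp K_n)=2$ and each pair at distance two lies in a $4$-cycle. Since $\Delta(K_n\cp K_n)=2(n-1)$, Theorem~\ref{thm:diam-2-C4} yields $\chimu(K_n\cp K_n) = \mathcal{O}(\sqrt{n})$.

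For the lower bound, I would bound $\mu(K_n\cp K_n)$ from above using the rectangle-free/Zarankiewicz connection. Consider any mutual-visibility set $S\subseteq[n]\times[n]$. If $(a,b),(c,d)\in S$ with $a\ne c$ and $b\ne d$, then the only shortest paths between them in $K_n\cp K_n$ pass through $(a,d)$ or $(c,b)$, so at least one of these two vertices must lie outside $S$. Equivalently, $S$ contains no four vertices $(a,b),(a,d),(c,b),(c,d)$ with $a\ne c$, $b\ne d$, i.e.\ $S$ is rectangle-free when viewed as a subset of the $n\times n$ grid. The Kővári--Sós--Turán theorem then gives $|S|=\mathcal{O}(n^{3/2})$, so $\mu(K_n\cp K_n)=\mathcal{O}(n^{3/2})$. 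Since $n(K_n\cp K_n)=n^{2}$, Lemma~\ref{lem:simple-lower-bound} yields
\[
\chimu(K_n\cp K_n)\ \ge\ \left\lceil\frac{n^{2}}{\mu(K_n\cp K_n)}\right\rceil\ =\ \Omega(\sqrt{n}),
\]
which, together with the upper bound, proves $\chimu(K_n\cp K_n)=\Theta(\sqrt{n})$ for $n$ sufficiently large.

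I do not expect any serious obstacle: both ingredients are essentially black boxes (Theorem~\ref{thm:diam-2-C4} with its reliance on the Kang--Kim--Liu decomposition, and the classical Kővári--Sós--Turán bound on rectangle-free grids). The only point requiring genuine care is the equivalence between mutual-visibility sets of $K_n\cp K_n$ and rectangle-free subsets of $[n]\times[n]$; this must be stated and verified explicitly so that Zarankiewicz applies cleanly and so that the asymptotic constants absorbed into the $\mathcal{O}(\cdot)$ and $\Omega(\cdot)$ symbols are justified.
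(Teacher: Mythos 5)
Your proposal is correct and follows essentially the same route as the paper: the upper bound via Theorem~\ref{thm:diam-2-C4}, and the lower bound by bounding $\mu(K_n\cp K_n)=\mathcal{O}(n^{3/2})$ through the Zarankiewicz connection and then applying Lemma~\ref{lem:simple-lower-bound}. The only (harmless) difference is that you verify the rectangle-free characterization of mutual-visibility sets directly, whereas the paper cites the identity $\mu(K_m\cp K_n)=z(m,n;2,2)$ from the literature together with the explicit bound $z(n,n;2,2)\le\frac{1}{2}n(1+\sqrt{4n-3})$.
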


\begin{proof}
First note that $K_n\cp K_n$ fulfills the assumptions of Theorem~\ref{thm:diam-2-C4}, hence $\chimu(K_n\cp K_n) = \mathcal{O}(\sqrt{n})$.

To prove that $\chimu(K_n\cp K_n) = \Omega(\sqrt{n})$, we recall from~\cite[Corollary~3.7]{Cicerone-2023} that if $m, n\ge 2$, then $\mu(K_m\cp K_n) = z(m,n;2,2)$, where $z(m, n; 2, 2)$ is the maximum number of $1$s that an $m\times n$ binary matrix can have, provided that it contains no $2\times 2$ submatrix of $1$s. (To determine the value $z(m, n; 2, 2)$ is an instance of the Zarankiewicz's problem, see~\cite{west-2021}.) When $n$ is sufficiently large, the value $z(n, n; 2, 2)$ can be bounded as follows~\cite{brown-1966, erdos-1966}:
$$\mu(K_n\cp K_n)  \le z(n,n;2,2) \le \frac{1}{2} n(1+ \sqrt{4n-3})\,.$$
This implies that, provided that $n$ is large enough, a largest mutual-visibility set of $K_n\cp K_n$ is of order $n^{3/2}$. Therefore, we need $\Theta(\sqrt{n})$ mutual-visibility sets to partition $V(K_n\cp K_n)$, that is, $\chimu(K_n\cp K_n) = \Omega(\sqrt{n})$.
\end{proof}

Theorem~\ref{thm:diam-2-C4} also shows that the bound given in Proposition \ref{pro:chimu-chi} is in general not achieved, since it is well known from \cite{Sabidussi} that $\chi(K_n\cp K_n)=n$.

%%%%%%%%%%%%%%%%%%%%%%%%%%%%%%%%%%%%%%%%%%%
\section{Greedy mutual-visibility coloring algorithm}
\label{sec:greedy}
%%%%%%%%%%%%%%%%%%%%%%%%%%%%%%%%%%%%%%%%%%%

A natural greedy algorithm for coloring a graph with mutual-visibility sets is to select at each step a largest mutual-visibility set of $G$ among the vertices which are not yet colored, and color the selected set with a new color. This is formalized in Algorithm~\ref{alg:greedy}.

\begin{algorithm}
\caption{Greedy mutual-visibility coloring}
\begin{algorithmic}[1]
\State\textbf{Input:}~{Connected graph $G$}.
\State\textbf{Output:}~{Mutual-visibility coloring of $G$}.
\State $V = V(G)$, $c = 1$
\State while $V\ne \emptyset$:
\State \quad determine a largest mutual-visibility set $X$ of $G$, where $X\subseteq V$
\State \quad color the vertices of $X$ with $c$
\State \quad $V = V \setminus X$, $c = c+1$
 \end{algorithmic}
 \label{alg:greedy}
\end{algorithm}

Algorithm~\ref{alg:greedy} is optimal in many cases. It will follow from our subsequent results, that it works optimally on block graphs (provided at each step of the algorithm the set of simplicial vertices is selected, which is indeed a largest mutual-visibility set at each step). For another example consider the strong grids $H_k = P_{2k}\boxtimes P_{2k}$, $k\ge 1$. If follows from~\cite[Theorem~4.4]{Cicerone-2024a} that $\mu(H_k) = 8k-4$. Moreover, the set of all vertices of $H_k$ which are not of maximum degree (that is, the set of boundary vertices of $H_k$) forms a $\mu$-set of $H_k$. (It can be actually proved that this set is the unique $\mu$-set of $H_k$.) Then Algorithm~\ref{alg:greedy} will color these vertices with color 1, and proceeding by induction, the algorithm will color $H_k$ with $k$ colors. On the other hand, the main diagonal of $H_k$ is the unique shortest path between its end vertices. Because this path contains $2k$ vertices, we infer that $\chimu(H_k)\ge k$ which in turn implies that Algorithm~\ref{alg:greedy} returns an optimal coloring.

The next result provides another example which demonstrates that Algorithm~\ref{alg:greedy} is optimal, and at the same time it gives a family of graphs for which the bound of Lemma~\ref{lem:simple-lower-bound} is sharp.

\begin{proposition}
If $t = 6k$, $k\ge 3$, then $\chimu(C_t \cp C_t) = 2k$.
\end{proposition}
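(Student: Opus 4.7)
The goal is to prove matching bounds $\chimu(C_t\cp C_t)\ge 2k$ and $\chimu(C_t\cp C_t)\le 2k$. For the lower bound I would first verify the one-dimensional version $\chimu(C_t)=2k$: the triples $\{a,a+2k,a+4k\}$ for $a\in\{0,\dots,2k-1\}$ partition $V(C_t)$ into $2k$ mutual-visibility sets (each internal vertex of the shorter arc between two members of a triple falls outside the triple because all three arcs on $C_t$ have length $2k<t/2$), while a short argument shows $\mu(C_t)=3$ (four cyclically ordered vertices always contain two whose unique shortest arc runs through a third), which combined with Lemma~\ref{lem:simple-lower-bound} yields $\chimu(C_t)\ge\lceil t/3\rceil=2k$. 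Then, observing that any shortest path in $C_t\cp C_t$ between two vertices lying in a common $C_t$-layer cannot leave that layer without strictly increasing its length, the restriction of any mutual-visibility coloring of $C_t\cp C_t$ to a single layer is itself a mutual-visibility coloring of $C_t$, so at least $\chimu(C_t)=2k$ colors are used.

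For the upper bound I would exhibit a partition of $V(C_t\cp C_t)$ into $2k$ mutual-visibility sets. The restriction argument above forces each class in an optimal partition to meet every row and every column in exactly three vertices forming a mutual-visibility triple of the corresponding $C_t$, so each class has size $3t=18k$. The cleanest first attempt is the triple-diagonal family $V_a=\{(i,j):(j-i)\bmod 2k=a\}$, but this fails: for instance $(0,0)$ and $(2k,4k)$ both lie in $V_0$, their distance equals $4k$, and along every length-$4k$ geodesic the quantity $(j-i)\bmod 2k$ necessarily returns to $0$ at the midpoint. A successful construction must break this midpoint coincidence; my intended scheme is three mutually translated ``staircases'' on the torus whose spacing along one coordinate exceeds $2k$ while still leaving room for a perpendicular geodesic detour along the other coordinate, so that for every pair of same-class vertices at least one shortest path can bypass the remaining members of the class.

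The main obstacle is precisely to design those three curves composing each $V_a$ and to verify the mutual-visibility condition on a pair-by-pair basis. The antipodal pairs with both coordinates at distance about $2k$ are the delicate case because the torus offers the least geodesic flexibility there; the assumption $k\ge 3$ (equivalently $t\ge 18$) should be exactly what is needed to define three translates disjointly and to push the case analysis through. Besides proving the theorem, such a partition has two immediate by-products highlighted in the surrounding text: it makes each class a $\mu$-set of $C_t\cp C_t$, so Lemma~\ref{lem:simple-lower-bound} is sharp on this family, and Algorithm~\ref{alg:greedy} returns an optimal coloring on $C_t\cp C_t$ whenever its choices at steps $1,\dots,2k$ coincide with one of these classes.
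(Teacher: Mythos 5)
There is a genuine gap in the upper bound, which is the substantive half of the statement. Your lower bound is fine, and in fact takes a slightly different and more self-contained route than the paper: you restrict an arbitrary mutual-visibility coloring to a $C_t$-layer (which is convex in the Cartesian product, so the restriction is a mutual-visibility coloring of $C_t$) and use $\chimu(C_t)=2k$, whereas the paper simply quotes $\mu(C_t\cp C_t)=3t$ from Kor\v{z}e--Vesel and applies Lemma~\ref{lem:simple-lower-bound} to get $\lceil t^2/(3t)\rceil=2k$. Both work.

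For the upper bound, however, you never actually produce a partition into $2k$ mutual-visibility sets. You correctly observe that the naive triple-diagonal classes $V_a=\{(i,j):(j-i)\bmod 2k=a\}$ fail (the pair $(0,0)$, $(2k,4k)$ is a valid counterexample, since every geodesic between them decreases $j-i$ by one at each step and hence hits $V_0$ at its midpoint), and you then announce that some family of three translated ``staircases'' per class should work --- but you do not define these curves, and you explicitly defer the pair-by-pair verification, which is exactly where all the difficulty lies. Designing a $3t$-element mutual-visibility set of $C_t\cp C_t$ is a nontrivial known result: the paper resolves this step by importing the explicit set $M$ of cardinality $3t$ constructed in the proof of Proposition~3.3 of Kor\v{z}e--Vesel, whose key property is that it meets each copy of the first factor in three vertices pairwise at distance $2k$ (i.e.\ equally spaced); the $2k$ translates of $M$ by $0,1,\dots,2k-1$ in the first coordinate are then again mutual-visibility sets by vertex-transitivity, and the equal spacing is precisely what makes these translates partition $V(C_t\cp C_t)$. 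Without either reproducing such a construction or citing one, your argument establishes only $\chimu(C_t\cp C_t)\ge 2k$, not equality. (A secondary caveat: your remark that each class must meet each layer in an \emph{equally spaced} triple does not follow from the restriction argument --- a $3$-element mutual-visibility set of $C_{6k}$ only needs all three arcs of length at most $3k$ --- though this does not affect the logic of your plan.)
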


\begin{proof}
By~\cite[Proposition~3.3]{Korze-2024a} we have $\mu(C_t\cp C_t) = 3t$. Hence by Lemma~\ref{lem:simple-lower-bound} we get $\chimu(C_t\cp C_t) \ge \lceil t^2/3t \rceil = t/3 = 2k$.

In the proof of \cite[Proposition~3.3]{Korze-2024a}, a set $M$ of cardinality $3t$ is constructed and proved to be a mutual-visibility set of $C_t\cp C_t$. We will not repeat here the explicit (slightly complicated) definition of this set, but instead identify its key properties. In each layer with respect to the first factor of $C_t\cp C_t$, the set $M$ has exactly three vertices which are uniformly spaced at distance $2k$, that is, the three vertices from $M$ which lie in a given layer are pairwise at distance $2k$. By the transitivity of $C_t\cp C_t$, $X$ can be respectively shifted $2k-1$ times, each time by $1$ in the first coordinate, to construct sets $M_2, M_3, \ldots, M_{2k}$. Using the transitivity of $C_t\cp C_t$ again, each of the sets $M_2, M_3, \ldots, M_{2k}$ is a mutual-visibility set. Since $V(C_t\cp C_t) = M\bigcup_{i=2}^{2k}M_i$, we have thus found a mutual-visibility coloring of $C_t\cp C_t$ using $2k$ colors. We conclude that $\chimu(C_t\cp C_t) \le 2k$ and we are done.
\end{proof}

We next show that Algorithm~\ref{alg:greedy} is not optimal in general. For $k\ge 1$, let $G_k$ be the graph obtained from $C_4$ by amalgamating $k$ private $4$-cycles to each of the four edges of the original $C_4$, see Fig.~\ref{fig:Gk}. Setting $V(C_4) = [4]$, we denote the remaining vertices as can be seen from the figure. Note that $n(G_k) = 8k + 4$.

\begin{figure}[ht!]
\begin{center}
\begin{tikzpicture}[scale=0.85,style=thick]

		\node [style=white vertex] (0) at (0, 3) {};
		\node [style=black vertex] (1) at (3, 3) {};
		\node [style=white vertex] (2) at (0, 6) {};
		\node [style=black vertex] (3) at (3, 6) {};
		\node [style=black vertex] (4) at (1, 7) {};
		\node [style=white vertex] (5) at (2, 7) {};
		\node [style=black vertex] (6) at (1, 9) {};
		\node [style=white vertex] (7) at (2, 9) {};
		\node [style=white vertex] (8) at (4, 5) {};
		\node [style=white vertex] (9) at (4, 4) {};
		\node [style=white vertex] (10) at (6, 5) {};
		\node [style=white vertex] (11) at (6, 4) {};
		\node [style=white vertex] (12) at (2, 2) {};
		\node [style=black vertex] (13) at (1, 2) {};
		\node [style=white vertex] (14) at (2, 0) {};
		\node [style=black vertex] (15) at (1, 0) {};
		\node [style=black vertex] (16) at (-1, 4) {};
		\node [style=black vertex] (17) at (-1, 5) {};
		\node [style=black vertex] (18) at (-3, 5) {};
		\node [style=black vertex] (19) at (-3, 4) {};
		\node [style=black vertex] (20) at (1, 7) {};

  		\node [style=none] (21) at (4.6, 4) {$(12)_1$};
		\node [style=none] (22) at (2.5, 3.5) {$1$};
		\node [style=none] (23) at (2.5, 5.5) {$2$};
		\node [style=none] (24) at (0.5, 5.5) {$3$};
		\node [style=none] (25) at (0.5, 3.5) {$4$};
		\node [style=none] (27) at (6.6, 5) {$(21)_k$};
		\node [style=none] (28) at (6.6, 4) {$(12)_k$};
		\node [style=none] (29) at (4.6, 5) {$(21)_1$};
		\node [style=none] (30) at (0.95, 7.5) {$(32)_1$};
		\node [style=none] (32) at (2.05, 7.5) {$(23)_1$};
		\node [style=none] (33) at (0.7, 9.4) {$(32)_k$};
		\node [style=none] (34) at (2.3, 9.4) {$(23)_k$};
		\node [style=none] (35) at (1.0, 1.5) {$(41)_1$};
		\node [style=none] (36) at (2.0, 1.5) {$(14)_1$};
		\node [style=none] (37) at (0.7, -0.4) {$(41)_k$};
		\node [style=none] (38) at (2.3, -0.4) {$(14)_k$};
		\node [style=none] (39) at (1.5, 8.5) {$\vdots$};
		\node [style=none] (40) at (1.5, 0.75) {$\vdots$};
		\node [style=none] (42) at (-3.6, 4) {$(43)_k$};
		\node [style=none] (43) at (-1.6, 5) {$(34)_1$};
		\node [style=none] (44) at (-1.6, 4) {$(43)_1$};
		\node [style=none] (45) at (-3.6, 5) {$(34)_k$};
    \node [style=none] (46) at (5.3, 4.5) {$\cdots$};
    \node [style=none] (46) at (-2.3, 4.5) {$\cdots$};

		\draw (0) to (2);
		\draw (2) to (3);
		\draw (3) to (1);
		\draw (1) to (0);
		\draw (0) to (16);
		\draw (16) to (17);
		\draw (17) to (2);
		\draw (2) to (18);
		\draw (18) to (19);
		\draw (19) to (0);
		\draw (2) to (20);
		\draw (20) to (5);
		\draw (5) to (3);
		\draw (2) to (6);
		\draw (6) to (7);
		\draw (7) to (3);
		\draw (3) to (8);
		\draw (8) to (9);
		\draw (9) to (1);
		\draw (1) to (11);
		\draw (11) to (10);
		\draw (10) to (3);
		\draw (1) to (12);
		\draw (12) to (13);
		\draw (13) to (0);
		\draw (0) to (15);
		\draw (15) to (14);
		\draw (14) to (1);

\end{tikzpicture}
\end{center}
\caption{Graph $G_k$}
\label{fig:Gk}
\end{figure}
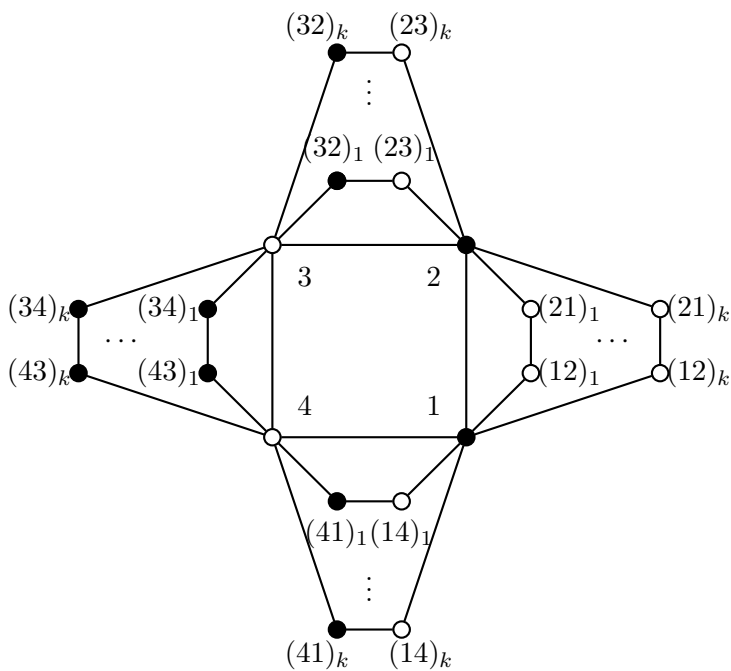

\begin{proposition}
    \label{prop:mu-and-chimu-of-Gk}
If $k\ge 1$, then $\mu(G_k) = 8k$ and $\chimu(G_k) = 2$.
\end{proposition}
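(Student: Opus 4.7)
The plan is to handle the two equalities separately.

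For $\mu(G_k) = 8k$, denote by $B_{ij} = \{(ij)_m : m \in [k]\}$ the $k$ attached non-central vertices that are adjacent to the central vertex $i$ on the edge $ij$, and let $j_1, j_2$ be the two neighbors of $i$ in the central $C_4$. The four sets $A_i = \{i\} \cup B_{ij_1} \cup B_{ij_2}$, each of size $2k+1$, are pairwise disjoint and partition $V(G_k)$. For the lower bound I would exhibit $M^{\ast} = V(G_k) \setminus \{1,2,3,4\}$ and verify its mutual visibility via the observation that for any two non-central vertices $u, v$ a shortest $u,v$-path exists with all internal vertices central: if $u, v$ share a central neighbor $i$, use $u-i-v$; otherwise concatenate the edge $u-i$, a shortest $i,i'$-path inside the central $C_4$, and the edge $i'-v$. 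For the upper bound the key observation is that any two distinct vertices of $B_{ij_1} \cup B_{ij_2}$ lie at distance $2$ with unique shortest path through $i$ (they have $i$ as their only common neighbor). Consequently, for any mutual-visibility set $M$: if $i \in M$ then $|M \cap (B_{ij_1} \cup B_{ij_2})| \le 1$ and so $|M \cap A_i| \le 2$, whereas if $i \notin M$ then trivially $|M \cap A_i| \le 2k$. Either way $|M \cap A_i| \le 2k$ for $k \ge 1$, and summing over $i$ yields $|M| \le 8k$.

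For $\chimu(G_k) = 2$, the lower bound is immediate since $G_k$ is not complete (e.g.\ $1$ and $3$ are not adjacent). For the upper bound I would use the $2$-coloring displayed in Figure~\ref{fig:Gk}, described uniformly as: assign one color to $\{1,2\}$ and the other to $\{3,4\}$, and then give each non-central vertex the color opposite to that of its unique central neighbor. Checking that this is a mutual-visibility coloring reduces to finding, for each monochromatic pair $\{u,v\}$, a shortest $u,v$-path whose internal vertices are all of the opposite color, and I would argue by two routing principles: (i) when $u, v$ share a central neighbor $i$, the length-$2$ path $u-i-v$ works because $i$ carries the opposite color by construction; (ii) for pairs farther apart, route through the central $C_4$, using that its two monochromatic edges (the one between $1$ and $2$ and the one between $3$ and $4$) provide access to the opposite-colored central pair.

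The main obstacle is the case analysis for the $\chimu$ upper bound. The delicate cases are a central vertex paired with a same-colored non-central vertex attached at the opposite side of the $C_4$ (for instance $1$ and $(34)_m$), and two same-colored non-central vertices attached at distinct central vertices of the opposite color (for instance $(32)_m$ and $(41)_{m'}$). In each such situation the shortest path routed through the two opposite-colored central vertices (here $3$ and $4$, via $1-4-3-(34)_m$ or $(32)_m-3-4-(41)_{m'}$) has all internal vertices of the opposite color, and a systematic check over the remaining combinations of endpoint types confirms that such a routing always exists.
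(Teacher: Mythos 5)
Your argument is correct and matches the paper's proof in all essentials: the same set $V(G_k)\setminus\{1,2,3,4\}$ certifies $\mu(G_k)\ge 8k$, and your two-coloring (each of $\{1,2\}$ and $\{3,4\}$ grouped with the pendant vertices attached to the opposite central pair) is exactly the partition $\{X,X'\}$ used in the paper, verified by the same routing through the two opposite-colored central vertices. The only difference is in proving $\mu(G_k)\le 8k$, where the paper deducts one vertex from each of the four disjoint unique geodesics of the form $(14)_1,\,1,\,(12)_1$, while you partition $V(G_k)$ into the four sets $A_i$ and bound $|M\cap A_i|\le 2k$ by a case analysis on whether $i\in M$; both counts rest on the same observation that two pendant vertices sharing the central neighbor $i$ see each other only through $i$, so your variant is equally valid.
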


\begin{proof}
It is straightforward to check that the set $V(G_k)\setminus [4]$ is a mutual-visibility set, hence $\mu(G_k) \ge 8k$. On the other hand, consider the following four paths: (i) $(14)_1, 1, (12)_1$, (ii) $(21)_1, 2, (23)_1$, (iii) $(32)_1, 3, (34)_1$, and (iv) $(43)_1, 4, (41)_1$. Each of these paths is the unique shortest path between its end vertices, hence an arbitrary mutual-visibility set can contain at most two vertices from each of these four disjoint paths. It follows that $\mu(G_k) \le n(G_k) - 4 = 8k$. We can conclude that $\mu(G_k) = 8k$.

Assume now that
$$X = \{1, 2\} \bigcup \left\{ (32)_i, (41)_i, (34)_i, (43)_i:\ i\in [k]\right\}$$
and let $X' = V(G_k) \setminus X$. See Fig.~\ref{fig:Gk}, where the black vertices are the vertices of $X$. By examining all the possibilities (and taking into account the symmetries), we can verify that $X$ is a mutual-visibility set of $G_k$. Using the symmetry again we get that $X'$ is likewise a mutual-visibility set. Since $V(G_k) = X\cup X'$ we conclude that $\chimu(G_k) = 2$.
\end{proof}

The graphs $G_k$ of Proposition~\ref{prop:mu-and-chimu-of-Gk} thus demonstrate that Algorithm~\ref{alg:greedy} is not optimal in general. Indeed, in view of the proof of the proposition, the algorithm first color all the vertices from the set  $V(G_k)\setminus [4]$ with color $1$. After that the vertices $1, 2, 3, 4$ are yet to be colored. Since they induce a geodetic subgraph of $G_k$ isomorphic to $C_4$, the algorithm colors three of these vertices with color 2 and the remaining vertex with color 3. As $\chimu(G_k) = 2$, the conclusion follows.

Our next goal is to show that Algorithm~\ref{alg:greedy} can return an arbitrarily bad result. For this sake consider the class of graphs $F_k$, $k\ge 2$, obtained from the disjoint union of $C_{4k}$ and $P_{2k-1}$ by adding an edge between a vertex of $C_{4k}$ and one vertex of degree one in $P_{2k-1}$. Let the vertices of $C_{4k}$ be $x_i$, $i\in [4k]$, and let the vertices of $P_{2k-1}$ be $y_i$, $i\in \{2, 3, \ldots, 2k\}$, where the edges in both graphs are natural. Then $F_k$ is obtained from these $C_{4k}$ and $P_{2k-1}$ by adding the edge $x_1y_2$. $F_k$ belongs to the family of the so-called frog graphs, see~\cite{Cicerone-2023}, which also gives rise to the notation for these graphs.

\begin{proposition}
\label{prop:chi-of-frog}
If $k\ge 2$, then $\chimu(F_k) = 2k$.
\end{proposition}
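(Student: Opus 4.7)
The plan is to sandwich $\chimu(F_k)$ between matching bounds of $2k$.

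For the \emph{lower bound}, the move is to exhibit a long geodetic subpath of $F_k$ and invoke the standard observation that any mutual-visibility set contains at most two vertices of a geodetic subgraph that is a path (three such vertices would place the middle one on the unique shortest path between the outer two, breaking $S$-visibility). Concretely, I will consider the subpath
\[
Q:\ y_{2k},\,y_{2k-1},\,\ldots,\,y_2,\,x_1,\,x_2,\,\ldots,\,x_{2k}
\]
on $4k-1$ vertices. I first need to verify that $Q$ is a geodetic subgraph of $F_k$: the vertices $y_i$ can reach the cycle only through $x_1$, and on the cycle portion $x_1,\ldots,x_{2k}$ the arc used stays inside the short half (length at most $2k-1$), so it is the unique shortest arc between its endpoints in $C_{4k}$. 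The length computations are then routine. Once $Q$ is certified as geodetic, the observation above forces each mutual-visibility class to contain at most two vertices of $V(Q)$, so $\chimu(F_k)\ge \lceil (4k-1)/2\rceil = 2k$.

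For the \emph{upper bound}, I will exhibit the explicit partition
\[
C_1 = \{x_1,\,x_{2k+1}\}, \qquad C_i = \{y_i,\,x_i,\,x_{2k+i}\}\ \text{for}\ i \in \{2,\ldots,2k\}.
\]
These classes are pairwise disjoint and account for $2 + 3(2k-1) = 6k-1 = n(F_k)$ vertices, so they form a partition into $2k$ parts. Certifying each $C_i$ as a mutual-visibility set reduces to checking the (at most three) pairs inside it. The antipodal pair $\{x_i, x_{2k+i}\}$ on $C_{4k}$ is witnessed by the short arc $x_i, x_{i+1},\ldots,x_{2k+i}$, whose internal vertices lie outside $C_i$. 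The pair $\{y_i, x_i\}$ is joined by the unique shortest path through $y_{i-1},\ldots,y_2,x_1,x_2,\ldots,x_{i-1}$, which avoids $C_i$. Symmetrically, $\{y_i,x_{2k+i}\}$ is joined by the unique shortest path running the other way through $x_1, x_{4k}, x_{4k-1},\ldots,x_{2k+i+1}$, which again avoids $C_i$.

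The one place where care is needed is the antipode of $x_1$: putting any $y_j$ together with $x_1$ and $x_{2k+1}$ would fail, because $x_1$ lies on every shortest $y_j$-to-$x_{2k+1}$ path and would block $S$-visibility. The partition above sidesteps this pitfall by isolating $\{x_1, x_{2k+1}\}$ in its own class $C_1$, attaching each $y_i$ only to the safe antipodal pair $\{x_i, x_{2k+i}\}$ with $i\ge 2$. Combining the two directions gives $\chimu(F_k)=2k$.
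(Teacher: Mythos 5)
Your proof is correct, and the upper bound is essentially the paper's: the paper uses the partition $\{x_1,x_{2k+1}\}$, $\{y_i,x_i,x_{4k-i+2}\}$ for $i\in\{2,\ldots,2k\}$, pairing $y_i$ with the mirror image of $x_i$ across $x_1$ rather than with its antipode $x_{2k+i}$; both choices are verified by the same three pairwise checks, and the pitfall you flag (never putting a $y_j$ in the class of $x_1$ and $x_{2k+1}$) is exactly the one the paper's partition also avoids. The lower bound, however, is a genuinely different route. The paper quotes $\mu(F_k)=3$ from~\cite{Cicerone-2023} and applies Lemma~\ref{lem:simple-lower-bound} to get $\chimu(F_k)\ge\lceil (6k-1)/3\rceil=2k$; you instead take the path $Q$ on $4k-1$ vertices from $y_{2k}$ to $x_{2k}$ and argue that each color class meets it in at most two vertices, giving $\lceil(4k-1)/2\rceil=2k$. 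For that argument what you need (and what your verification in fact establishes) is slightly stronger than the paper's notion of a geodetic subgraph: the subpath of $Q$ between any two of its vertices must be the \emph{unique} shortest path in $F_k$, which holds because the pendant path funnels everything through $x_1$ and the cycle arcs used have length at most $2k-1<2k$. Your route is self-contained and needs no external computation of $\mu(F_k)$; the paper's route is shorter given that citation and, as a by-product, exhibits the frog graphs as a family for which the bound of Lemma~\ref{lem:simple-lower-bound} is attained, which is the point being made in that part of the paper.
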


\begin{proof}
Consider the following partition of $V(F_k)$:
$$\{x_1, x_{2k+1}\},  \{y_2, x_2, x_{4k}\},  \{y_3, x_3, x_{4k-1}\}, \ldots, \{y_{2k}, x_{2k}, x_{4k-(2k-2)}\} = \{y_{2k}, x_{2k}, x_{2k+2}\}\,.$$
Since each part of this partition is a mutual-visibility set of $F_k$ we infer that $\chimu(F_k) \le 2k$. On the other hand, we recall from~\cite[Theorem 4.4]{Cicerone-2023} that $\mu(F_k) = 3$. By Lemma~\ref{lem:simple-lower-bound} we then get $$\chimu(G) \ge \left\lceil\frac{n(F_k)}{\mu(F_k)}\right\rceil = \left\lceil\frac{6k - 1}{3}\right\rceil = 2k\,,$$
and we are done.
\end{proof}

Consider now the following sequence of mutual-visibility sets of $F_k$:
$$\{x_1, x_2, x_{2k+1}\},  \{x_3, x_4, x_{2k+3}\}, \ldots, \{x_{2k-1}, x_{2k}, x_{2k + (2k-1)}\} = \{x_{2k-1}, x_{2k}, x_{4k-1}\}\,.$$
Algorithm~\ref{alg:greedy} legally colors the vertices from these respective sets with colors $1, 2\ldots, k$. At that stage of the algorithm, the vertices
$$x_{2k+2}, x_{2k+4}, \ldots, x_{4k}, y_2, y_3, \ldots, y_{2k}$$
are not yet colored. They all lie on a shortest $x_{2k+2},y_{2k}$-path. Moreover, this path is the unique shortest path between $x_{2k+2}$ and $y_{2k}$, therefore a mutual-visibility set can contain at most two vertices out of them. It follows that the algorithm uses at least $\lceil(k + 2k-1)/2 \rceil = \lceil(3k-1)/2 \rceil$ colors for them, hence overall at least $k + \lceil(3k-1)/2 \rceil$ colors are used. This demonstrates that even if an optimal coloring uses color classes that are $\mu$-sets, Algorithm~\ref{alg:greedy} may (by selecting ``wrong" $\mu$-sets) return a coloring using arbitrary more colors than the mutual-visibility chromatic number.

%%%%%%%%%%%%%%%%%%%%%%%%%%%%%%%%%%%%%%%%%%%
\section{General bounds}
\label{sec:bounds}
%%%%%%%%%%%%%%%%%%%%%%%%%%%%%%%%%%%%%%%%%%%

Since the mutual-visibility properties of graphs stands on a geodetic distance frame, it is natural to think that for a graph $G$, the parameter $\chimu(G)$ relates to the diameter of $G$. However, such relationships becomes clear only for geodetic graphs. We open this section precisely with the following result on geodetic graphs.

\begin{proposition}\label{prop:diam}
Let $G$ be a geodetic graph. Then
$$\chimu(G) \ge \left\lceil \frac{\diam(G)+1}{2}\right\rceil.$$
Moreover, if $G$ is a block graph, then the equality is achieved.
\end{proposition}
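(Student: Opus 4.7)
The plan is to prove the two directions separately: the lower bound using only that $G$ is geodetic, and the upper bound for block graphs by induction on $\diam(G)$.

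For the lower bound, fix a diametral path $P = v_0 v_1 \cdots v_d$ where $d = \diam(G)$. Since $G$ is geodetic, for every $0 \le i < j \le d$ the subpath of $P$ joining $v_i$ and $v_j$ is the unique shortest $v_i, v_j$-path in $G$. I then claim that any color class $C$ of a mutual-visibility coloring of $G$ satisfies $|C \cap V(P)| \le 2$: if $v_i, v_j, v_k \in C$ with $i < j < k$, then $v_j$ is an internal vertex of the unique shortest $v_i, v_k$-path yet lies in $C$, contradicting $C$-visibility. Since $|V(P)| = d+1$, this forces $\chimu(G) \ge \lceil (d+1)/2 \rceil$.

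For the upper bound when $G$ is a block graph, I induct on $d$. The base cases $d \in \{0, 1\}$ are immediate, since $G$ is either a single vertex or a clique. For the inductive step, let $S$ be the set of simplicial vertices of $G$ and set $G' = G - S$. Three structural facts are needed. First, $S$ is a mutual-visibility set of $G$: two simplicial vertices in different blocks are joined by a unique shortest path whose internal vertices must all be cut vertices, since a simplicial internal vertex would have both neighbours on the path inside its unique block, allowing the path to be shortened via the cliqueness of that block; hence no internal vertex lies in $S$. Second, $G'$ is a connected block graph; the block-graph property is hereditary to induced subgraphs, and the same internal-cut-vertex observation ensures that the shortest path in $G$ between any two cut vertices remains entirely in $G'$, so $G'$ is connected and $d_{G'}(x,y) = d_G(x,y)$ for cut vertices $x,y$.

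Third, and most delicate, I need $\diam(G') \le d - 2$. Given cut vertices $x, y$ of $G$ with $d_G(x, y) = \ell$, the cut property yields a neighbour $u$ of $x$ in a component of $G - x$ not containing $y$, whence every $u,y$-path crosses $x$ and $d_G(u,y) = 1 + \ell$; symmetrically a neighbour $v$ of $y$ is chosen in a component of $G - y$ not containing $x$, and a straightforward check confirms $u \ne v$ and $d_G(u, v) = d_G(u, x) + d_G(x, y) + d_G(y, v) = \ell + 2$. Thus $\ell + 2 \le d$, and combined with the distance equality on cut vertices this gives $\diam(G') \le d - 2$. Applying the induction hypothesis to $G'$ yields $\chimu(G') \le \lceil (d-1)/2 \rceil$, and colouring $S$ with one fresh colour produces $\chimu(G) \le 1 + \lceil (d-1)/2 \rceil = \lceil (d+1)/2 \rceil$, which matches the lower bound.

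I expect the diameter-reduction estimate to be the main technical obstacle; the other ingredients are standard structural properties of block graphs. The degenerate case $d = 2$ also warrants a brief sanity check: there the estimate forces $G'$ to be at most a single vertex (any two cut vertices would push the diameter up to at least $3$ by the same argument), correctly yielding $\chimu(G) \le 2$.
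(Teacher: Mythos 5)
Your proof is correct and follows essentially the same route as the paper's: the lower bound via a diametral path in a geodetic graph (no three of whose vertices can share a color), and the upper bound by iteratively removing the set $S$ of simplicial vertices, which is a mutual-visibility set, leaves a connected block graph, and decreases the diameter by two. Your justification of the diameter drop, via neighbours of the cut vertices $x,y$ chosen in separated components of $G-x$ and $G-y$, is if anything slightly more careful than the paper's, which instead invokes simplicial neighbours of the diametral endpoints of $G-S$.
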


\begin{proof}
Let $P=\{v_i:i\in [\diam(G)+1]\}$ be a diametral path of $G$. Since $G$ is geodetic, no three vertices
in $P$ may have the same color. Therefore, we need at least
$\left\lceil \frac{\diam(G)+1}{2}\right\rceil$ colors in a mutual-visibility coloring of the
vertices of $P$. Hence, we deduce that $\chimu(G) \ge \left\lceil \frac{\diam(G)+1}{2}\right\rceil.$

Assume now that $G$ is a block graph. (Note that block graphs are geodetic because for any two vertices a shortest path between them follows cut vertices along the blocks between them and is hence unique.) The equality holds when $G$ is complete, hence assume in the rest that $\diam(G)\ge 2$. Let $S$ be the set of simplicial vertices of $G$. Then $G-S$ is again a block graph. Let $x,y\in V(G)\setminus S$ such that $d_{G-S}(x,y) = \diam(G-S)$. Hence $x$ and $y$ are cut vertices of $G$ and there exist simplicial vertices $x',y'\in V(G)$, respectively adjacent to $x, y$. Therefore $\diam(G-S) = d_{G-S}(x,y) = d_G(x,y) = d_G(x',y') - 2 = \diam(G)-2$. Since the set $S$ forms a mutual-visibility set, we color its vertices with the same color. Continuing in the same manner on the block graph $G-S$, we arrive to a mutual-visibility coloring using $\left\lceil \frac{\diam(G)+1}{2}\right\rceil$ colors.
\end{proof}

Let $P$ be the Petersen graph. Then it can be checked that $\chimu(P) = 2$, hence the Petersen graph is a sporadic example for which the equality holds in Proposition~\ref{prop:diam}. The latter result also yields the mutual-visibility chromatic number of trees, a result worthy of special mention.

\begin{corollary}
\label{cor:trees}
If $T$ is a tree, then
$$\chimu(T) = \left\{
\begin{array}{ll}
\rad(T)+1;  &  \diam(T)\ {\text even}, \\
\rad(T) ; & \diam(T)\ {\text odd}.
\end{array}
\right.
$$
\end{corollary}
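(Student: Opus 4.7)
The plan is to derive this as an immediate consequence of Proposition~\ref{prop:diam}, using only the standard relationship between the radius and diameter of a tree.

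First I would observe that every tree $T$ is a block graph, since each edge of $T$ is itself a block (a maximal $2$-connected subgraph, in the degenerate $K_2$ sense). Hence Proposition~\ref{prop:diam} applies, giving
$$\chimu(T) = \left\lceil \frac{\diam(T)+1}{2}\right\rceil.$$
The rest is just translating the right-hand side into the radius language. I would invoke the well-known identity for trees,
$$\rad(T) = \left\lceil \frac{\diam(T)}{2}\right\rceil,$$
which holds because in a tree every diametral path contains (one or) two centers and the eccentricity of such a center equals $\lceil \diam(T)/2 \rceil$.

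Then I would split into the two parities. If $\diam(T) = 2r$ is even, then $\rad(T) = r$ and $\lceil (2r+1)/2 \rceil = r+1 = \rad(T)+1$. If $\diam(T) = 2r-1$ is odd, then $\rad(T) = r$ and $\lceil 2r/2 \rceil = r = \rad(T)$. Combining these two cases with the expression from Proposition~\ref{prop:diam} yields the claimed formula.

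There is no real obstacle here; the only point that requires a line of justification is that trees fall into the scope of the block-graph half of Proposition~\ref{prop:diam}, and that the tree identity $\rad(T)=\lceil \diam(T)/2\rceil$ may be cited (or given a one-line proof via the center of a tree). Everything else is a parity computation.
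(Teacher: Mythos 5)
Your proposal is correct and matches the paper's (implicit) derivation exactly: the paper presents this as an immediate corollary of Proposition~\ref{prop:diam} applied to trees as block graphs, with the same translation $\rad(T)=\left\lceil \diam(T)/2\right\rceil$ and parity check that you carry out. No gaps.
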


Apart from the trivial bound from Lemma \ref{lem:simple-lower-bound}, our parameter can be related to the classical mutual-visibility number in a different way. The next upper bound appears to be a basic one, but it shows to be very useful. We remark that the bound is somehow derived from the greedy algorithm described in Section~\ref{sec:greedy}.

\begin{proposition}
\label{prop:genbounds}
If $G$ is a connected graph, then
%\begin{equation}\label{chain1}
$$\chimu(G) \le \left\lceil\frac{n(G)-\mu(G)+2}{2}\right\rceil.$$
%\end{equation}
\end{proposition}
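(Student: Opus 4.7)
The plan is to imitate the first step of the greedy procedure and then handle the remainder in the cheapest possible way. Start by fixing a $\mu$-set $M$ of $G$ and assigning color $1$ to every vertex of $M$; by definition this first color class is a legal mutual-visibility color class.

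The key observation for the rest is that \emph{any} two vertices $u,v\in V(G)$ form a mutual-visibility set. Indeed, with $S=\{u,v\}$, every shortest $u,v$-path $P$ satisfies $V(P)\cap S=\{u,v\}$ simply because the internal vertices of $P$ are distinct from its endpoints. Thus $u$ and $v$ are $S$-visible, and trivially any single vertex is also a mutual-visibility set.

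Let $r = n(G)-\mu(G)$ and consider the $r$ vertices of $V(G)\setminus M$. Partition them arbitrarily into pairs together with at most one singleton (a pairing exists since they are listed in any order): this produces $\lceil r/2 \rceil$ blocks, each of which is a mutual-visibility set by the previous paragraph. Assigning a fresh color to each block yields a mutual-visibility coloring of $G$ using exactly
\[
1 + \left\lceil \frac{n(G)-\mu(G)}{2}\right\rceil \;=\; \left\lceil \frac{n(G)-\mu(G)+2}{2}\right\rceil
\]
colors, which is the desired upper bound on $\chimu(G)$.

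There is no real obstacle: the only mildly non-trivial step is recognizing that the two-vertex subsets are automatically mutual-visibility sets, which is why trading the largest available mutual-visibility set (color $1$) for cheap pair-colorings of the leftover vertices gives the stated inequality. This is precisely the greedy intuition alluded to before the statement, executed with just one greedy step followed by a trivial pairing.
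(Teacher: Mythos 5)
Your proof is correct and follows essentially the same route as the paper: color a $\mu$-set with the first color and then split the remaining $n(G)-\mu(G)$ vertices into pairs (plus possibly a singleton), each of which is automatically a mutual-visibility set. Your explicit justification that any two vertices form a mutual-visibility set is a welcome clarification of a step the paper passes over quickly.
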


\begin{proof}
Let $k=\left\lceil\frac{n(G)-\mu(G)}{2}\right\rceil$ and let $S$ be a $\mu$-set of $G$. Consider $\left\{S_j: \ j \in \left[k\right] \right\}$ as a partition of $V(G)\setminus S$ such that every $S_j$ has cardinality two for every $j \in \left[k\right]-1$, and $|S_{k}|\le 2$.

Now, it can be readily seen that the partition of $V(G)$ given by $\mathcal{P} = S \cup \{S_j: \ j \in \left[k\right] \}$ induces a mutual-visibility coloring of $G$, since $G$ is connected and each $S_j$ has cardinality at most $2$. Thus, $\chimu (G) \le
1+ \left\lceil\frac{n(G)-\mu(G)}{2} \right\rceil = \left\lceil\frac{n(G)-\mu(G)+2}{2} \right\rceil$.
\end{proof}

Since the bound of Proposition~\ref{prop:genbounds} depends on the value of the mutual-visibility number, and computing such parameter is an NP-hard problem (cf. \cite{DiStefano}), the following corollary gives us a tool to bound above the value of $\chimu (G)$ in terms of the maximum degree of a graph. This is based on the fact that for any vertex of a graph $G$, its neighborhood forms a mutual-visibility set of $G$, which implies that $\mu(G)\ge \Delta(G)$.

\begin{corollary}
\label{cor:Deltabounds}
If $G$ is a connected graph, then
%\begin{equation}\label{chain2}
    $$\chimu(G) \le \left\lceil\frac{n(G)-\Delta(G)+2}{2}\right\rceil.$$
%\end{equation}
\end{corollary}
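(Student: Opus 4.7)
The plan is to derive this corollary almost immediately from Proposition~\ref{prop:genbounds}, by showing that $\Delta(G)$ is a valid lower surrogate for $\mu(G)$ in the bound. The key intermediate claim is that for any connected graph $G$ we have $\mu(G) \ge \Delta(G)$, which is precisely the fact flagged in the paragraph preceding the statement.

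First, I would fix a vertex $v \in V(G)$ with $\deg_G(v) = \Delta(G)$ and verify that $N_G(v)$ is a mutual-visibility set of $G$. Take any $x, y \in N_G(v)$. If $xy \in E(G)$, then $x$ and $y$ are adjacent and hence $N_G(v)$-visible by definition. Otherwise $d_G(x,y) = 2$, because the path $x,v,y$ has length two, and this path witnesses that $x$ and $y$ are $N_G(v)$-visible, since its unique internal vertex $v$ does not lie in $N_G(v)$ (the graph is simple, so $v \notin N_G(v)$). This gives $\mu(G) \ge |N_G(v)| = \Delta(G)$.

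Next, I would combine this with Proposition~\ref{prop:genbounds}. Since the map $t \mapsto \lceil (n(G) - t + 2)/2 \rceil$ is non-increasing in $t$, the inequality $\mu(G) \ge \Delta(G)$ yields
\[
\chimu(G) \le \left\lceil\frac{n(G)-\mu(G)+2}{2}\right\rceil \le \left\lceil\frac{n(G)-\Delta(G)+2}{2}\right\rceil,
\]
which is exactly the claimed bound.

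There is no genuine obstacle here; the only thing to be careful about is making the visibility check for the two cases (adjacent vs.\ at distance two in $N_G(v)$) explicit, and invoking the monotonicity of the ceiling expression in $\mu(G)$ to pass from Proposition~\ref{prop:genbounds} to the weaker but $\Delta$-dependent bound.
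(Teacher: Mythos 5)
Your proposal is correct and follows exactly the paper's route: the corollary is deduced from Proposition~\ref{prop:genbounds} via the observation that the open neighborhood of a maximum-degree vertex is a mutual-visibility set, giving $\mu(G)\ge \Delta(G)$. You merely spell out the two-case visibility check and the monotonicity step, which the paper leaves implicit.
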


Let us point out that the condition for $G$ to be connected is essential for applying this result.
For example, consider the infinite family of graphs $G(p,q)=K_p \cup \overline{K_q}$. In such case, for every $q\ge 3$ it holds that
\begin{align*}
\chimu(G(p,q)) & = q+1 > \left\lceil\frac{n(G(p,q))-\Delta(G(p,q))+2}{2} \right\rceil \\
   & =\left\lceil\frac{p+q-(p-1)+2}{2} \right\rceil
   =\left\lceil\frac{q+3}{2} \right\rceil\,.
\end{align*}

For graphs containing a vertex adjacent to all other vertices,  the following consequence is deduced.

\begin{corollary}
If $G$ is not complete and $\Delta(G)=n(G)-1$, then $\chimu(G)=2$.
\end{corollary}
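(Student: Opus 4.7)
The plan is to recognize this as essentially a direct application of Corollary~\ref{cor:Deltabounds} combined with a trivial lower bound. First I would plug $\Delta(G) = n(G) - 1$ into the bound of Corollary~\ref{cor:Deltabounds} to obtain
$$\chimu(G) \le \left\lceil\frac{n(G) - (n(G) - 1) + 2}{2}\right\rceil = \left\lceil\frac{3}{2}\right\rceil = 2.$$
This gives the upper bound; note that the hypothesis that $G$ is connected, required to apply the corollary, is automatic here since a universal vertex forces connectedness.

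Next I would establish the matching lower bound $\chimu(G) \ge 2$. The argument is that $\chimu(G) = 1$ would mean that $V(G)$ itself is a mutual-visibility set, which in turn would require every pair of distinct vertices of $G$ to be joined by a shortest path having no internal vertices of $G$, i.e., to be adjacent. This forces $G$ to be complete, contradicting the hypothesis. Hence $\chimu(G) \ge 2$, and combined with the previous inequality we conclude $\chimu(G) = 2$.

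Since the claim is an immediate corollary, there is essentially no technical obstacle. If one preferred a self-contained argument bypassing Corollary~\ref{cor:Deltabounds}, one could instead directly exhibit the two-coloring: let $v$ be a universal vertex of $G$ and consider the partition $\{v\} \cup (V(G)\setminus\{v\})$. The singleton is trivially a mutual-visibility set, and for any two vertices $x,y \in V(G)\setminus\{v\}$ either $xy \in E(G)$ (so they are visible via an edge) or $d_G(x,y) = 2$ with $x\text{-}v\text{-}y$ a shortest path whose unique internal vertex $v$ lies outside $V(G)\setminus\{v\}$; either way $V(G)\setminus\{v\}$ is a mutual-visibility set. Together with $\chimu(G) \ge 2$ from non-completeness, this again yields $\chimu(G) = 2$.
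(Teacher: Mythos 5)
Your proof is correct and follows essentially the same route as the paper: the upper bound $\chimu(G)\le 2$ comes from Corollary~\ref{cor:Deltabounds} with $\Delta(G)=n(G)-1$ in both cases. The only (inessential) difference is in the lower bound, where the paper invokes Lemma~\ref{lem:simple-lower-bound} via $\mu(G)=n(G)-1$, while you use the equivalent direct observation that $\chimu(G)=1$ forces $G$ to be complete.
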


\begin{proof}
Since $\Delta(G)=n-1$ and $G$ is not complete, $\mu(G) = n-1$. Therefore, Lemma~\ref{lem:simple-lower-bound} and Corollary~\ref{cor:Deltabounds} became equalities and we may derive that $\chimu(G)=2.$
\end{proof}

Observe that the maximum possible value of the upper bound from Corollary~\ref{cor:Deltabounds} is $\left\lceil\frac{n(G)}{2}\right\rceil$ which appears when $\Delta(G)=2$. We next characterize the graphs achieving precisely this maximum value.

\begin{proposition}\label{prop:oddpath}
If $G$ is a graph, then $\frac{n(G)}{2}<\chimu(G)=\left\lceil\frac{n(G)}{2}\right\rceil$
if and only if $G$ is a path of odd order.
\end{proposition}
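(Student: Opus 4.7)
The plan is to begin by observing that the strict inequality $n(G)/2 < \lceil n(G)/2 \rceil$ forces $n(G)$ to be odd. Writing $n(G) = 2m+1$, the hypothesis is therefore equivalent to $\chimu(G) = m+1$. The case $m=0$ reduces to $G = P_1$, so I will assume $m \ge 1$.

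For the ``if'' direction, a single application of Corollary~\ref{cor:trees} does the job: in $P_{2m+1}$ the diameter $2m$ is even and the radius is $m$, hence $\chimu(P_{2m+1}) = m+1 = \lceil n(G)/2 \rceil$, which visibly exceeds $n(G)/2 = m + \tfrac{1}{2}$.

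For the converse, my strategy is to first pin down the maximum degree and then eliminate cycles. Corollary~\ref{cor:Deltabounds} gives
$$m+1 \;=\; \chimu(G) \;\le\; \left\lceil \tfrac{n(G) - \Delta(G) + 2}{2} \right\rceil.$$
If $\Delta(G) \ge 3$, then $n(G) - \Delta(G) + 2 \le 2m$, so the right-hand side is at most $m$, a contradiction. Hence $\Delta(G) \le 2$, and since $G$ is connected, $G$ is either $P_n$ or $C_n$. To discard the cyclic possibility, I would combine Proposition~\ref{prop:genbounds} with the elementary observation that $\mu(C_n) \ge 3$ for every $n \ge 3$ (all of $K_3$ when $n=3$, and a suitably spaced triple of vertices for $n \ge 4$). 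This yields $\chimu(C_n) \le \lceil (n-3+2)/2 \rceil = \lceil (n-1)/2 \rceil = m$, again contradicting $\chimu(G) = m+1$. So $G$ must be a path, and since $n$ is odd, it is a path of odd order.

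The argument is essentially a bookkeeping exercise built entirely on Corollary~\ref{cor:Deltabounds} and Proposition~\ref{prop:genbounds}; the only point requiring care is the cycle case, and even there the only thing to verify is the harmless bound $\mu(C_n) \ge 3$, so I do not anticipate any real obstacle.
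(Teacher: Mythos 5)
Your proposal is correct and follows essentially the same route as the paper: force $n(G)$ odd, use the $\Delta$-based upper bound of Corollary~\ref{cor:Deltabounds} to conclude $\Delta(G)\le 2$, and then rule out odd cycles via $\mu(C_n)\ge 3$. The only cosmetic differences are that you invoke Corollary~\ref{cor:trees} for $\chimu(P_{2m+1})=m+1$ where the paper combines Lemma~\ref{lem:simple-lower-bound} with Proposition~\ref{prop:genbounds}, and that you dispose of the cycle case by plugging $\mu(C_n)\ge 3$ into Proposition~\ref{prop:genbounds} rather than re-deriving the pairwise partition explicitly.
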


\begin{proof}
Let $n = n(G)$ and $\Delta = \Delta(G)$. First, let us suppose that $G=P_{2k+1}.$ By Lemma~\ref{lem:simple-lower-bound} and Proposition~\ref{prop:genbounds} we have that $\chimu(G)=\left\lceil \frac{n}{2} \right\rceil.$ On the other hand, assume now that $\frac{n}{2}< \chimu(G)=\left\lceil \frac{n}{2} \right\rceil$ which implies that $n$ is an odd integer. By Proposition~\ref{prop:genbounds} we deduce that
$$ \left\lceil \frac{n}{2} \right\rceil= \left\lceil\frac{n-\Delta+2}{2} \right\rceil. $$
Since $n$ is odd, we have that $ \frac{n+1}{2} = \frac{n+1}{2}+ \left\lceil\frac{1-\Delta}{2} \right\rceil $, and therefore, $\Delta \le 2.$ Clearly, $\Delta=1$ implies that $G=K_2$, because $G$ is connected, which is not possible because $n$ is odd. Hence, it is derived that $\Delta =2$ and $G$ must be either an odd cycle, or a path of odd order.

If $G$ is a cycle, then it is known that $\mu(G)=3.$ Let $S$ be a mutual visibility set of order $3$ in the cycle. As $|V(G)\setminus S|=n-3$ is even, we may consider a partition of $V(G)\setminus S$ into $(|V(G)\setminus S|)/2$ sets of cardinality $2$, which are mutual-visibility sets. Then
$$ \chimu(G) \le 1+ \frac{|V(G)\setminus S|}{2}=1+\frac{n-3}{2}=\frac{n-1}{2}<\frac{n+1}{2}=\chimu(G),$$
which is a contradiction. Therefore, we conclude that $G$ must be a path with an odd number of vertices.
\end{proof}

The following realization result complements Proposition~\ref{prop:oddpath}.

\begin{proposition}
    Let $n,k$ be positive integers such that
    $2 \le  k \le \left\lfloor\frac{n}{2}\right\rfloor$. Then
    there is a graph $G$ with $n = n(G)$ and
    $$\left\lceil\frac{n(G)}{\mu(G)}\right\rceil \le \chimu(G)=k \le \left\lfloor\frac{n(G)}{2}\right\rfloor.$$
\end{proposition}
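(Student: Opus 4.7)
My plan is to exhibit, for each admissible pair $(n,k)$, an explicit tree $T_{n,k}$ on $n$ vertices satisfying $\chimu(T_{n,k}) = k$; this suffices, since once $\chimu(T_{n,k}) = k$ is in hand, the full chain of inequalities in the statement comes for free, the lower bound $\lceil n(T_{n,k})/\mu(T_{n,k}) \rceil \le \chimu(T_{n,k})$ being Lemma~\ref{lem:simple-lower-bound} and $k \le \lfloor n/2 \rfloor$ being the hypothesis on $(n,k)$.

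For the construction I would take the caterpillar $T_{n,k}$ obtained from the path $v_1 v_2 \cdots v_{2k}$ by attaching $n - 2k$ new pendant vertices to the internal vertex $v_2$; the assumption $k \ge 2$ guarantees that $v_2$ is genuinely internal, while $n \ge 2k$ guarantees a nonnegative number of leaves is added (with the degenerate case $n = 2k$ simply giving $P_{2k}$ itself). Clearly $n(T_{n,k}) = n$, and a short routine distance check confirms $\diam(T_{n,k}) = 2k-1$: for any added pendant $u$ one has $d(u, v_{2k}) = 2k-1$ and $d(u, v_1) = 2$, while any two added pendants sit at distance $2$ from each other, so the diametral pairs of $T_{n,k}$ are exactly $\{v_1, v_{2k}\}$ together with $\{u, v_{2k}\}$ for each added leaf $u$.

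Since $T_{n,k}$ is a tree and in particular a block graph, Proposition~\ref{prop:diam} then applies and yields the exact value
\[
\chimu(T_{n,k}) \;=\; \left\lceil \frac{\diam(T_{n,k})+1}{2} \right\rceil \;=\; \left\lceil \frac{2k}{2} \right\rceil \;=\; k.
\]
Alternatively, one could cite Corollary~\ref{cor:trees}, which gives the same conclusion since $\diam(T_{n,k}) = 2k-1$ is odd and $\rad(T_{n,k}) = k$. There is no genuine obstacle in this plan: beyond the elementary diameter computation, the value of $\chimu(T_{n,k})$ is handed to us directly by the block-graph half of Proposition~\ref{prop:diam}, so the only thing one must be mindful of is anchoring the added leaves at an internal vertex (here $v_2$) so that $\diam$ is not accidentally inflated beyond $2k-1$.
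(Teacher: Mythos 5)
Your proposal is correct and is essentially the paper's own construction: your caterpillar (a path $v_1\cdots v_{2k}$ with $n-2k$ pendants at $v_2$) is isomorphic to the paper's ``broom'' (a path on $p=2k-1$ vertices with $q=n-2k+1$ pendants attached to one leaf), and both handle $k=\lfloor n/2\rfloor$ via $P_{2k}$. The only difference is cosmetic: the paper re-derives $\chimu(G)=k$ by computing $\mu(G)=q+1$ and exhibiting an explicit coloring, whereas you invoke the block-graph equality of Proposition~\ref{prop:diam} (equivalently Corollary~\ref{cor:trees}) and correctly observe that the inequality $\left\lceil n(G)/\mu(G)\right\rceil\le\chimu(G)$ comes for free from Lemma~\ref{lem:simple-lower-bound}.
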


\begin{proof}
If $k=\left\lfloor\frac{n}{2}\right\rfloor$, then it is sufficient to consider the path graph $G=P_{2k}$.
From now on, we may assume that $2 \le  k \le \left\lfloor\frac{n}{2}\right\rfloor -1.$
Let $G$ be the graph obtained by joining a leaf of a path having $p=2k-1$ vertices with $q=n-2k+1$ isolated vertices, see Fig.~\ref{broom}.

\begin{figure}[ht!]
\centering
\begin{tikzpicture}[scale=.5, transform shape, style=thick]
% Node styles
\tikzstyle{none}=[scale=0.7,fill=none, draw=none, shape=circle]
\tikzstyle{black_V}=[fill=black, draw=black, shape=circle]

% Edge styles
\tikzstyle{dashed_E}=[-, dashed]

		\node [style={black_V}] (0) at (7, 0) {};
		\node [style={black_V}] (1) at (5, 0) {};
		\node [style={black_V}] (2) at (3, 0) {};
		\node [style={black_V}] (3) at (1, 0) {};
		\node [style={black_V}] (4) at (-4, 0) {};
		\node [style={black_V}] (5) at (-6, 0) {};
		\node [style={black_V}] (6) at (-8, 0) {};
		\node [style={black_V}] (7) at (12, 5) {};
		\node [style={black_V}] (8) at (12, 3) {};
		\node [style={black_V}] (9) at (12, 1) {};
		\node [style={black_V}] (10) at (12, -1) {};
		\node [style={black_V}] (11) at (12, -3) {};
		\node [style={black_V}] (12) at (12, -8) {};
		\node [style={black_V}] (13) at (12, 7) {};
		\node [style=none] (14) at (12, -3.5) {};
		\node [style=none] (15) at (12, -7) {};
		\node [style=none] (16) at (-3, 0) {};
		\node [style=none] (17) at (0, 0) {};
		\node [style=none] (18) at (-8, 1) {\Huge $v_1$};
		\node [style=none] (19) at (-6, 1) {\Huge $v_2$};
		\node [style=none] (20) at (-4, 1) {\Huge $v_3$};
		\node [style=none] (21) at (4.75, 1) {\Huge $v_{p-1}$};
		\node [style=none] (22) at (6.5, 1) {\Huge $v_{p}$};
		\node [style=none] (23) at (13.5, 7) {\Huge $v_{p+1}$};
		\node [style=none] (24) at (13.5, 5) {\Huge $v_{p+2}$};
		\node [style=none] (26) at (13.5, -8) {\Huge $v_{p+q}$};

            \draw (0) to (12);
		\draw (3) to (0);
		\draw (0) to (8);
		\draw (7) to (0);
		\draw (0) to (9);
		\draw (13) to (0);
		\draw (10) to (0);
		\draw (11) to (0);
		\draw [style={dashed_E}] (14.center) to (15.center);
		\draw [style={dashed_E}] (6) to (4);
		\draw [style={dashed_E}] (17.center) to (16.center);
	
\end{tikzpicture}
\caption{A graph $G$ or order $n=p+q$ with $\chimu(G)=k=\frac{p+1}{2}$.} \label{broom}
\end{figure}
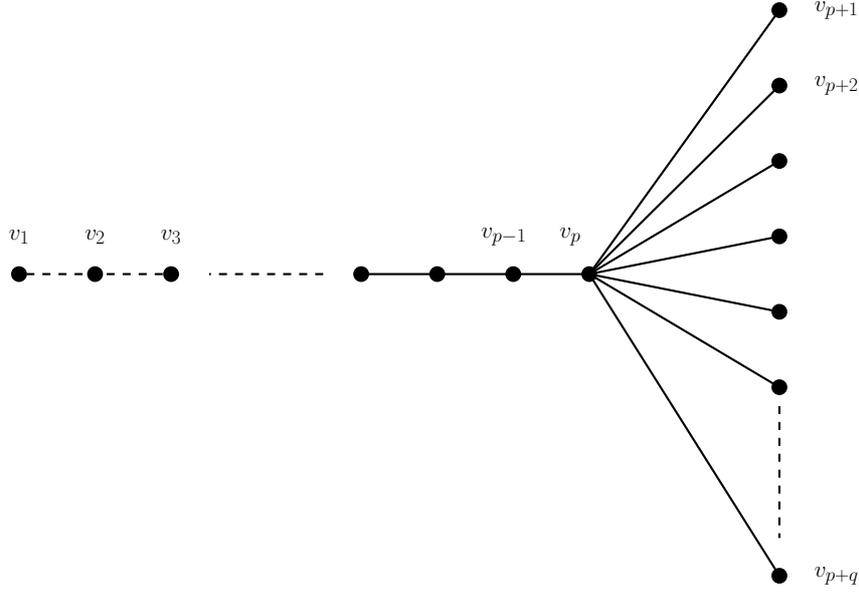

Since $4 \le 2k \le n-2$, the graph $G$ is well-defined and we have that $3\le p,q\le n-3.$
It is straightforward to check that the set of vertices $S=\{v_1,v_{p+1},\ldots , v_{p+q}\}$ is
a mutual-visibility set of maximum cardinality $q+1$. Hence, $\mu(G)=q+1.$

Note that $G$ is a geodetic graph with $\diam(G)=2k-1$. Then, by Proposition \ref{prop:diam}, we have that
$\chimu(G) \ge k.$ On the other hand, let ${\cal P}=\{ P_i= \{ v_{2i},v_{2i+1} \}:i\in [k-1] \}\cup \{ S \}$
is a mutual-visibility coloring of cardinality $k$. So, $\chimu(G) \le k$ and the equality holds.

Finally, we have to prove that $\left\lceil \frac{n(G)}{\mu(G)}\right\rceil \le k=\chimu(G)$. Note that
$n=p+q$, $p=2k-1$, and $\mu(G)=q+1$. Therefore, $\left\lceil\frac{n(G)}{\mu(G)}\right\rceil \le k$ if and
only if $\frac{p+q}{q+1} \le \frac{p+1}{2}$, which is equivalent to $p+q-1\le pq$.
Since $p,q> 1$, it is easy to check that the last inequality holds because $(p-1)(q-1)\ge 0$.
\end{proof}

Next, we show an upper bound for regular graphs that improves the one given by Proposition~\ref{prop:genbounds}.

\begin{proposition}\label{prop:regular}
If $G$ is an $r$-regular graph, $r\ge 2$, with girth $g(G)\ge 6$, then
    $$ \chimu(G) \le \left\lceil \frac{n(G)-r^2+4}{2} \right\rceil\,.$$
\end{proposition}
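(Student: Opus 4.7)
The plan is to improve upon Proposition~\ref{prop:genbounds} by locating, around any fixed vertex, \emph{two} disjoint mutual-visibility sets whose total size is exactly $r^2$; coloring them with two new colors and pairing up the remaining vertices then yields the required bound. Fix $v\in V(G)$, write $N_G(v)=\{u_1,\ldots,u_r\}$, and set $W_i=N_G(u_i)\setminus\{v\}$ for $i\in[r]$. Because $g(G)\geq 6$ forbids triangles and $4$-cycles, the sets $\{v\}$, $N_G(v)$ and $W_1,\ldots,W_r$ are pairwise disjoint, so the closed $2$-ball around $v$ has exactly $1+r+r(r-1)=r^2+1$ vertices. I would then define
\[
S_1=\bigl(N_G(v)\setminus\{u_1\}\bigr)\cup W_1,\qquad S_2=\{u_1\}\cup\bigl(W_2\cup\cdots\cup W_r\bigr),
\]
which are disjoint, neither contains $v$, and satisfy $|S_1|+|S_2|=2(r-1)+1+(r-1)^2=r^2$.

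Next I would verify that $S_1$ and $S_2$ are both mutual-visibility sets by exhibiting, for each pair of their vertices, one shortest path whose internal vertices avoid the set. For $S_1$: two members of $N_G(v)\setminus\{u_1\}$ see each other through $v$; a vertex $u_i$ with $i\geq 2$ and a vertex $w\in W_1$ see each other along $u_i v u_1 w$ with interior $v,u_1\notin S_1$; and two vertices of $W_1$ see each other through $u_1\notin S_1$. For $S_2$ the analogous pairs are $u_1$ with $w\in W_i$ ($i\geq 2$), handled by the path $u_1 v u_i w$, and two vertices of a single $W_i$, handled by $u_i\notin S_2$. The delicate case is $w\in W_i$ and $w'\in W_j$ with $i\neq j$ and $i,j\in\{2,\ldots,r\}$: the canonical path $w u_i v u_j w'$ has length $4$ with interior $u_i,v,u_j\notin S_2$, so it suffices to check that any potential shortcut keeps all its interior vertices outside $B_2(v)$. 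Using $g(G)\geq 6$ one rules out common neighbors of $w,w'$ lying in $N_G(v)\cup N^2_G(v)\cup\{v\}$ (each such candidate would close a triangle, $4$-cycle, or $5$-cycle with some of $v,u_i,u_j,w,w'$), and an entirely analogous argument forces any length-$3$ path $w\,a\,b\,w'$ to use two vertices $a,b$ of $N^3_G(v)$, which again lie outside $S_2$.

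Once $S_1$ and $S_2$ have been certified, the remaining $n(G)-r^2$ vertices—namely $v$ together with the vertices outside $B_2(v)$—are grouped arbitrarily into $\lceil (n(G)-r^2)/2\rceil$ singletons or pairs, each of which is trivially a mutual-visibility set in the connected graph $G$. Summing gives
\[
\chimu(G)\leq 2+\left\lceil\frac{n(G)-r^2}{2}\right\rceil=\left\lceil\frac{n(G)-r^2+4}{2}\right\rceil,
\]
as required. The hardest part will be the case analysis for $S_2$ when $i\neq j\geq 2$: one must keep track of how each admissible position of a hypothetical short-cutting vertex would complete a forbidden $3$-, $4$-, or $5$-cycle with some subset of $\{v,u_i,u_j,w,w'\}$, thereby systematically pushing every such vertex out of $B_2(v)$.
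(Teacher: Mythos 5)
Your proposal is correct and follows essentially the same route as the paper: both arguments color the $r^2$ vertices of $N_G(v)\cup\bigl(N_G(N_G(v))\setminus\{v\}\bigr)$ with two new colors and then pair up the remaining $n(G)-r^2$ vertices into mutual-visibility sets of size at most two. The paper simply uses the more natural split $S=N_G(v)$ and $T=N_G(N_G(v))\setminus\{v\}$, for which $S$ is a mutual-visibility set for free (an open neighborhood) and the girth hypothesis makes $G[T]$ edgeless, so the distance case analysis is shorter than the one forced by your shuffling of $u_1$ and $W_1$ between the two classes.
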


\begin{proof}
Set $n = n(G)$. Let $v\in V(G)$ and set $S=N_G(v)$ and $T=N_G(N_G(v))-v$. Since $g(G)\ge 6$, we infer that $S\cap T = \emptyset$, that $G[T]$ is an edgeless graph, and $|T|=r(r-1)$. And, clearly, $|S|=r$.

The set $S$  is a mutual-visibility set since it is an open neighborhood of a vertex. We further claim that $T$ is a mutual-visibility set as well. Consider arbitrary vertices $x, y\in T$ and note that $d_G(x,y)\le 4$. We wish to see that $x$ and $y$ are $T$-visible. Since $G[T]$ is an edgeless graph, there is nothing to show if $d_G(x,y)\le 2$. The fact that $G[T]$ is an edgeless graph also implies that if $d_G(x,y) = 3$, then no shortest $x,y$-path contains a vertex of $T$ as an interior vertex. Finally, if $d_G(x,y) = 4$, then there exits a shortest $x,y$-path containing $v$, hence also in this case $x$ and $y$ are $T$-visible. We can conclude that $T$ is a mutual-visibility set.

    The set of vertices $V(G)\setminus \{S,T\}$ has cardinality $n-r^2$, let its vertices be $v_i$, $i\in[n-r^2]$. Consider now the partition of $V(G)\setminus \{S,T\}$ given by
    $\mathcal{P}=\{ P_i=\{v_{2i-1},v_{2i}\}  :  i\in [\frac{n-r^2}{2}] \}$, if $n-r^2$ is even or
    $\mathcal{P}=\{ P_i=\{v_{2i-1},v_{2i}\}  :  i\in [\frac{n-r^2-1}{2}] \} \cup
    \{v_{n-r^2}\}$,  if $n-r^2$ is odd. Since every single element of $\mathcal{P}$
    has cardinality at most $2$, they are mutual-visibility sets of $G$. So, we can color $S$ and
    $T$ with two different colors and we may assign one more distinct color to each set $P_i$. Hence,
    $$ \chimu(G) \le 2+ \left\lceil\frac{n-r^2}{2}\right\rceil=\left\lceil\frac{n-r^2+4}{2}\right\rceil$$
    which we needed to prove.
\end{proof}

We can note that the bound given by Proposition~\ref{prop:regular} improves the one described in
the Proposition~\ref{prop:genbounds} for regular graphs, because $r^2-2\ge r=\Delta(G)$ for every
$r$-regular graph $G$.

We end this section with a Nordhaus-Gaddum-type upper bound related to the mutual-visibility chromatic number of a graph.

\begin{proposition}\label{prop:nordhaus}
If $G$ is a connected graph such that $\overline{G}$ is also connected, then
    \begin{equation*}\label{nordh}
     \chimu(G)+\chimu(\overline{G}) \le \left\lceil \frac{n(G)-\mu(G)+2}{2} \right\rceil+
    \left\lceil \frac{\delta(G)+3}{2} \right\rceil\,.
    \end{equation*}
\end{proposition}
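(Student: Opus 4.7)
The plan is to obtain the bound by simply applying Proposition~\ref{prop:genbounds} to $G$ and Corollary~\ref{cor:Deltabounds} to $\overline{G}$, and then adding the two inequalities. Both results require connectedness, which is provided by the hypothesis on $G$ and $\overline{G}$, so there is nothing to worry about on that front.

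First, I would observe that Proposition~\ref{prop:genbounds} applied to the connected graph $G$ yields immediately
$$\chimu(G) \le \left\lceil \frac{n(G) - \mu(G) + 2}{2}\right\rceil,$$
which contributes the first summand on the right hand side of the inequality to prove.

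Next, I would turn to $\overline{G}$. The key identity is that for any graph, $\Delta(\overline{G}) = n(G) - 1 - \delta(G)$, because the vertex of maximum degree in $\overline{G}$ is the one of minimum degree in $G$. Since $\overline{G}$ is assumed connected, Corollary~\ref{cor:Deltabounds} applies to it and gives
$$\chimu(\overline{G}) \le \left\lceil \frac{n(\overline{G}) - \Delta(\overline{G}) + 2}{2}\right\rceil = \left\lceil \frac{n(G) - \bigl(n(G) - 1 - \delta(G)\bigr) + 2}{2}\right\rceil = \left\lceil \frac{\delta(G) + 3}{2}\right\rceil,$$
which is precisely the second summand on the right hand side.

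Adding the two displayed inequalities yields the desired Nordhaus-Gaddum upper bound. There is no real obstacle here; the only subtle point is the translation $\Delta(\overline{G}) = n(G) - 1 - \delta(G)$, and the fact that the earlier bounds genuinely require the graph in question to be connected, which is exactly why the hypothesis that both $G$ and $\overline{G}$ are connected is imposed in the statement.
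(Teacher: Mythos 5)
Your proof is correct and follows exactly the same route as the paper: apply Proposition~\ref{prop:genbounds} to $G$, apply Corollary~\ref{cor:Deltabounds} to $\overline{G}$ using $\Delta(\overline{G}) = n(G)-1-\delta(G)$, and add the two bounds. Nothing further is needed.
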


\begin{proof}
Since $G$ is connected, Proposition~\ref{prop:genbounds} yields    $\chimu(G)\le \left\lceil \frac{n(G)-\mu(G)+2}{2} \right\rceil.$ Also, as $\overline{G}$ is connected, Corollary~\ref{cor:Deltabounds} gives  $\chimu(\overline{G})\le \left\lceil \frac{n(G)-\Delta(\overline{G})+2}{2} \right\rceil=\left\lceil \frac{\delta(G)+3}{2} \right\rceil.$ Therefore,
$$ \chimu(G)+\chimu(\overline{G}) \le \left\lceil \frac{n(G)-\mu(G)+2}{2} \right\rceil+\left\lceil \frac{\delta(G)+3}{2} \right\rceil$$
which is the desired bound.
\end{proof}

It can be readily seen that the bound given by Proposition~\ref{prop:nordhaus} is tight for paths
of odd order $G=P_{2k+1}$. Since $P_{2k+1}$ is a bipartite graph, it holds that $\chimu(\overline{G})=2$. By Proposition~\ref{prop:oddpath}, we know that $\chimu(P_{2k+1})=k+1.$ Therefore equality holds in Proposition~\ref{prop:nordhaus}.

In addition to the comments above, Proposition \ref{prop:nordhaus} also leads to the following conclusion by taking into account that $\chimu(G)\ge \Delta(G)$ for any graph $G$.

\begin{corollary}\label{coro:nordhaus}
If $G$ is a connected graph such that $\overline{G}$ is also connected, then
    \begin{equation*}
     \chimu(G)+\chimu(\overline{G}) \le \left\lceil \frac{n(G)+5}{2} \right\rceil.
    \end{equation*}
\end{corollary}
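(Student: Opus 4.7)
The plan is to derive the bound by plugging the inequality $\mu(G) \ge \Delta(G) \ge \delta(G)$ into Proposition~\ref{prop:nordhaus} and then simplifying the resulting sum of ceilings.

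The key input, which is exactly the fact highlighted in the paragraph preceding the corollary, is that for any vertex $v$ of $G$ the open neighborhood $N_G(v)$ is a mutual-visibility set: two nonadjacent vertices $u, w \in N_G(v)$ are at distance two, and the path $u v w$ is a shortest $u,w$-path whose unique interior vertex $v$ does not lie in $N_G(v)$. Consequently $\mu(G) \ge \Delta(G) \ge \delta(G)$, and substituting $\delta(G)$ for $\mu(G)$ in the first ceiling of Proposition~\ref{prop:nordhaus} preserves the inequality, producing
$$\chimu(G)+\chimu(\overline{G}) \le \left\lceil \frac{n(G)-\delta(G)+2}{2} \right\rceil + \left\lceil \frac{\delta(G)+3}{2} \right\rceil.$$

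Finally, the two numerators sum to $n(G)+5$, so a short calculation with the elementary identity $\lceil a/2 \rceil + \lceil b/2 \rceil \le \lceil (a+b)/2 \rceil$, which holds whenever at least one of $a, b$ is even, gives the target bound $\lceil (n(G)+5)/2 \rceil$. The main (and really only) obstacle is the parity case in which both numerators are odd, i.e. $n(G)$ odd and $\delta(G)$ even; I would resolve it by sharpening the substitution to $\mu(G) \ge \Delta(G) \ge \delta(G)+1$ in the nonregular subcase, together with a direct check in the regular subcase to absorb the extra unit from the ceiling inequality.
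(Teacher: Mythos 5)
Your route is the same as the paper's: substitute $\mu(G)\ge\Delta(G)\ge\delta(G)$ into Proposition~\ref{prop:nordhaus} and merge the two ceilings. You have in fact been more careful than the paper, whose one-sentence justification silently ignores the parity obstruction you isolate: when $n(G)$ is odd and $\delta(G)$ is even, both numerators $n(G)-\delta(G)+2$ and $\delta(G)+3$ are odd and the naive merge yields $\frac{n(G)+7}{2}$, one more than claimed. Your repair of the non-regular subcase is correct: if $\Delta(G)\ge\delta(G)+1$, the first ceiling improves to $\left\lceil\frac{n(G)-\delta(G)+1}{2}\right\rceil=\frac{n(G)-\delta(G)+1}{2}$ (even numerator in the bad parity case), and the two terms then sum to exactly $\frac{n(G)+5}{2}$.

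The genuine gap is the regular subcase, which you dismiss as ``a direct check'' without saying what the check is --- and it is not a routine computation. If $G$ is $r$-regular with $r$ even and $n(G)$ odd (e.g.\ $C_5$, $C_7$, or any $4$-regular graph of odd order with connected complement), then $\overline{G}$ is $(n(G)-1-r)$-regular with $n(G)-1-r$ also even, and every bound available in the paper gives $\chimu(G)\le\left\lceil\frac{n(G)-r+2}{2}\right\rceil=\frac{n(G)-r+3}{2}$ and $\chimu(\overline{G})\le\left\lceil\frac{r+3}{2}\right\rceil=\frac{r+4}{2}$, whose sum is again $\frac{n(G)+7}{2}$. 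To absorb the extra unit you would need something like $\mu(G)\ge r+1$ for connected, non-complete $r$-regular graphs (or the analogous improvement for $\overline{G}$), and nothing of the sort is established in the paper: the neighborhood argument only yields $\mu(G)\ge\Delta(G)=r$, and $N(v)\cup\{x\}$ need not be a mutual-visibility set. So this case requires an actual argument, not a check. To be fair to you, the paper's own derivation (``taking into account that $\mu(G)\ge\Delta(G)$'') founders on exactly the same subcase, so your proposal, while incomplete, is strictly closer to a proof than what the paper offers.
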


Notice that again the case of paths of odd order $G=P_{2k+1}$ can be used to show the tightness of the upper bound of Corollary \ref{coro:nordhaus}.

%%%%%%%%%%%%%%%%%%%%%%%%%%%%%%%%%%%%%%%%%%%
\section{Corona product graphs}
\label{sec:corona}
%%%%%%%%%%%%%%%%%%%%%%%%%%%%%%%%%%%%%%%%%%%

Let $G$ be a graph whose vertex set is $V(G) = \{v_1, \ldots ,v_{n}\}$, and let $H$ be a graph. The {\em corona product} $G\odot H$ of $G$ and $G$ is the graph defined as follows. We take one copy of $G$ and $n(G)$ disjoint copies of $H$, denoted $H^1,\ldots, H^{n}$. Next, for every integer $i\in [n]$, we add all the possible edges between any vertex $v_i\in V(G)$ and all the vertices of $H^i$.

\begin{proposition}
\label{prop:corona}
If $G$ is a connected graph of order at least two and $H$ is a graph, then
$$ \chimu(G) \le \chimu(G\odot H) \le \chimu(G)+1.  $$
\end{proposition}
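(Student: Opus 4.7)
The plan rests on one structural observation about $G\odot H$: if $u\in V(H^i)$, then every path from $u$ to a vertex outside $V(H^i)$ must traverse the edge $uv_i$, since $v_i$ is the only neighbor of $u$ outside $V(H^i)$. I would first record three consequences: (a) $V(G)$ is convex in $G\odot H$, so $d_{G\odot H}(v_i,v_j)=d_G(v_i,v_j)$ and every shortest $v_i,v_j$-path in $G\odot H$ lies entirely in $V(G)$; (b) for $x\in V(H^i)$ and $y\in V(H^j)$ with $i\ne j$, every shortest $x,y$-path has the form $x,v_i,P,v_j,y$ where $P$ is a shortest $v_i,v_j$-path in $G$; (c) if $x,y\in V(H^i)$ are non-adjacent, the length-$2$ path $x,v_i,y$ is a shortest $x,y$-path in $G\odot H$.

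For the lower bound $\chimu(G)\le\chimu(G\odot H)$, I would take an optimal mutual-visibility coloring $c$ of $G\odot H$ and restrict it to $V(G)$. For any two vertices $v_i,v_j\in V(G)$ sharing a color under $c$, the witness shortest path supplied by $c$ already lies in $V(G)$ by (a); its interior vertices then lie in $V(G)$ and carry colors different from $c(v_i)$, which makes them a valid mutual-visibility witness in $G$ as well. Hence the restriction of $c$ is a mutual-visibility coloring of $G$ using no more colors than $c$.

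For the upper bound, I would start from an optimal mutual-visibility coloring of $G$ with $k=\chimu(G)$ colors and extend it by assigning a single new color $k+1$ to every vertex in $V(H^1)\cup\cdots\cup V(H^{n(G)})$. Same-color pairs in $V(G)$ remain mutually visible by (a), since the $G$-witness paths are also shortest paths in $G\odot H$. For two $(k+1)$-colored vertices in a common copy $H^i$, they are either adjacent or witnessed by the length-$2$ path through $v_i$ from (c), whose sole interior vertex is colored in $\{1,\dots,k\}$. For two $(k+1)$-colored vertices in distinct copies $H^i,H^j$, the path from (b) has all interior vertices in $V(G)$, hence colored in $\{1,\dots,k\}\ne k+1$.

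The main point requiring care is the ``every shortest path'' clause of (a): one must justify that no shortest $v_i,v_j$-path in $G\odot H$ detours through a copy $H^k$. This is immediate because any such detour must enter $V(H^k)$ through $v_k$ and leave the same way, costing at least two edges with no compensating shortening, so no shortest path can afford it. Everything else in the argument is a direct verification of the mutual-visibility condition in the cases enumerated above.
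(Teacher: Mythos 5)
Your proposal is correct and follows essentially the same route as the paper: the lower bound via convexity of $V(G)$ in $G\odot H$ and restriction of an optimal coloring, and the upper bound by adding one new color class $\bigcup_i V(H^i)$ to an optimal coloring of $G$, with the same three-case verification. The only difference is that the paper first disposes of $G=K_n$ and $G=P_3$ separately (to record exact values), but as your argument shows, the general construction already covers those cases for the stated inequalities.
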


\begin{proof}
To prove the lower bound, let us consider a mutual-visibility coloring of $G\odot H$ given by
the collection of sets $\mathcal{P}=\{P_i\},$ with $i\in [\chimu(G\odot H)].$ Since $G$ is a convex subgraph of $G\odot H$, it holds that the collection $\mathcal{P'}=\{P_i\cap V(G)\}$ is a mutual-visibility coloring of $G$. Therefore $\chimu(G)\le\chimu(G\odot H)$.

Next, we prove the upper bound. If $G$ is a complete graph $K_n$, then it can be readily observed that $\chimu(K_n\odot H) = 2 = \chimu(K_n)+1$, since the vertex set $V(K_n)$ and the complement of it in $K_n\odot H$ are both mutual-visibility sets of $K_n\odot H$. Moreover, if $G$ is the graph $P_3$, then $G\odot H$ contains a geodetic subgraph isomorphic to $P_5$, hence it can be deduced that $\chimu(P_3\odot H) = 3 = \chimu(P_3)+1$. Thus, from now on we may assume that $G$ is a non-complete graph of order at least $4$.

Let $\mathcal{P}=\{P_1,\dots,P_r\}$ be a mutual-visibility coloring of $G$. We claim that $\mathcal{P}\cup W$, where $W=\bigcup_{i\in [n]}V(H^i)$, is a mutual-visibility coloring of $G\odot H$.

Clearly, any set $P_i\in \mathcal{P}$ is also a mutual-visibility set of $G\odot H$. On the other hand, if $x,y\in V(H^i)$ for some $i\in [n]$, then they are either adjacent or at distance two. Since they have a common neighbor (the vertex $v_i$) which is not in $W$, it holds that $x,y$ are $W$-visible when they are not adjacent. Assume now that $x\in V(H^i)$ and $y\in V(H^j)$ for some distinct $i,j\in [n]$. Hence, they are clearly $W$-visible by using any shortest $v_i,v_j$-path. As a consequence, it follows that $\mathcal{P}\cup W$ is a mutual-visibility coloring of $G\odot H$ as claimed, and so, $\chimu(G\odot H)\le \chimu(G)+1$.
\end{proof}

It is worth noting that the bounds given by Proposition~\ref{prop:corona} are tight. To see this, let us consider the
cycle $C_4$ with vertex set $V(C_4)=[4]$. Let us denote by $K_m^i$ the copy of $K_m$ corresponding to the vertex $i$ in $C_4\odot K_m$. We can readily check that $\chimu(C_4\odot K_m)=\chimu(C_4)=2$,
by considering the mutual-visibility coloring of $C_4\odot K_m$ given by the sets of vertices
$\{\{1,2, V(K_m^3), V(K_m^4)\}, \{3,4, V(K_m^1), V(K_m^2)\}\}$ (see Fig.~\ref{corona_lower}).
Similarly, we can verify that $\chimu(K_n\odot K_m)=\chimu(K_n)+1=2$ for any integers $m,n$ with $n\ge 2$ and $m\ge 1$.

\begin{figure}[ht!]
    \centering
\begin{tikzpicture}[scale=0.85, transform shape, style=thick]
		% C_4 nodes
            \node [style={black vertex}] (0) at (-1.5, 1.5) {};
		\node [style={black vertex}] (1) at (-1.5, -1.5) {};
		\node [style={white vertex}] (2) at (1.5, 1.5) {};
		\node [style={white vertex}] (3) at (1.5, -1.5) {};
            % C_4 labels
		\node [style=none] (8) at (-2, 1) {$1$};
		\node [style=none] (9) at (2, 1) {$3$};
		\node [style=none] (10) at (2, -1) {$4$};
		\node [style=none] (11) at (-2, -1) {$2$};
            % K_5^1
		\node [style={white vertex}] (12) at (-3, 5.65) {};
		\node [style={white vertex}] (13) at (-4.25, 4.5) {};
		\node [style={white vertex}] (14) at (-3.5, 3) {};
		\node [style={white vertex}] (15) at (-2.5, 3) {};
		\node [style={white vertex}] (16) at (-1.75, 4.5) {};
            % K_5^2
		\node [style={white vertex}] (17) at (-3, -2.5) {};
		\node [style={white vertex}] (18) at (-4.25, -3.65) {};
		\node [style={white vertex}] (19) at (-3.5, -5.15) {};
		\node [style={white vertex}] (20) at (-2.5, -5.15) {};
		\node [style={white vertex}] (21) at (-1.75, -3.65) {};
            % K_5^3
		\node [style={black vertex}] (22) at (3, 5.65) {};
		\node [style={black vertex}] (23) at (1.75, 4.5) {};
		\node [style={black vertex}] (24) at (4.25, 4.5) {};
		\node [style={black vertex}] (25) at (2.5, 3) {};
		\node [style={black vertex}] (26) at (3.5, 3) {};
            % K_5^4
		\node [style={black vertex}] (27) at (3, -2.5) {};
		\node [style={black vertex}] (28) at (1.75, -3.65) {};
		\node [style={black vertex}] (29) at (4.25, -3.65) {};
		\node [style={black vertex}] (30) at (2.5, -5.15) {};
		\node [style={black vertex}] (31) at (3.5, -5.15) {};
            % C_4 edges
		\draw (0) to (2);
		\draw (2) to (3);
		\draw (3) to (1);
		\draw (1) to (0);
            % K_m edges
		\draw (14) to (16);
            \draw (13) to (16);
            \draw (12) to (14);
		\draw (14) to (15);
		\draw (15) to (16);
		\draw (16) to (12);
		\draw (12) to (15);
		\draw (15) to (13);
		\draw (13) to (12);
		\draw (13) to (14);

		\draw (17) to (19);
		\draw (19) to (20);
		\draw (20) to (21);
		\draw (21) to (17);
		\draw (17) to (20);
		\draw (21) to (18);
		\draw (18) to (17);
		\draw (18) to (19);
            \draw (18) to (20);
            \draw (19) to (21);
		
		\draw (22) to (25);
		\draw (25) to (24);
		\draw (24) to (23);
		\draw (23) to (26);
		\draw (26) to (22);
		\draw (22) to (24);
		\draw (24) to (26);
		\draw (26) to (25);
		\draw (25) to (23);
		\draw (23) to (22);
		
		\draw (27) to (30);
		\draw (30) to (29);
		\draw (29) to (28);
		\draw (28) to (31);
		\draw (31) to (27);
		\draw (27) to (29);
		\draw (29) to (31);
		\draw (31) to (30);
		\draw (30) to (28);
		\draw (28) to (27);
		
              % edges between C and K
            \draw (1) to (17);
		\draw (1) to (21);
		\draw (1) to (18);
		\draw (1) to (19);
		\draw (1) to (20);
		\draw (0) to (14);
		\draw (0) to (15);
		\draw (0) to (16);
		\draw (0) to (12);
		\draw (0) to (13);
            \draw (2) to (23);
		\draw (2) to (22);
		\draw (2) to (25);
		\draw (2) to (26);
		\draw (2) to (24);
            \draw (3) to (27);
		\draw (3) to (28);
		\draw (3) to (29);
		\draw (3) to (31);
		\draw (3) to (30);
\end{tikzpicture}
\caption{A mutual-visibility coloring of $C_4 \odot K_5$ with $\chimu(C_4) = 2$ colors.}
\label{corona_lower}
\end{figure}
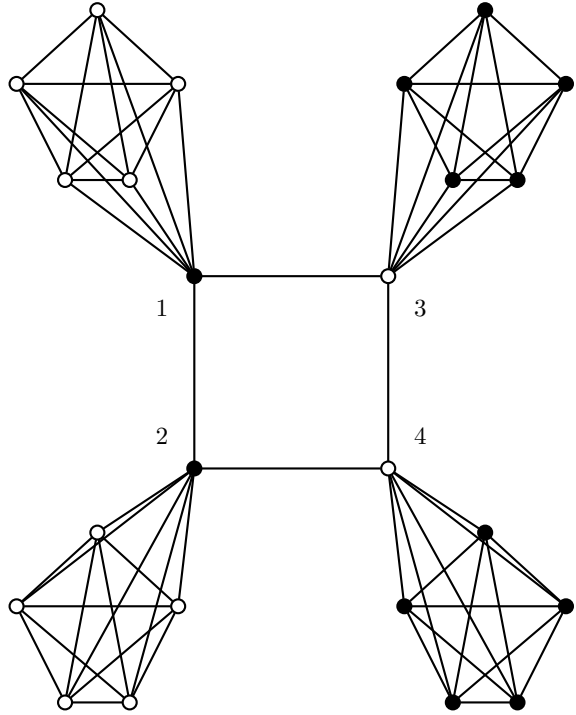

%%%%%%%%%%%%%%%%%%%%%%%%%%%%%%%%%%%%%%%%%%%
\section{Graphs with mutual-visibility chromatic number two}
\label{sec:chimu2}
%%%%%%%%%%%%%%%%%%%%%%%%%%%%%%%%%%%%%%%%%%%

It is straightforward to observe that $\chimu(G)=1$ if and only if $G$ is a complete graph. In this sense, the smallest non trivial value for $\chimu(G)$ is precisely $2$. This motivates the study of this section.

Consider the family $\mathcal{A}$ of graphs $G_A$ defined as follows. We begin with two graphs $A_r$ and $A_s$ and two complete graphs $K_r$ and $K_s$. Next, we add all possible edges between the vertices of $A_r$ and the vertices of $K_r$, as well as, all the possible edges between the vertices of $A_s$ and the vertices of $K_s$. Then, to obtain a graph $G_A\in \mathcal{A}$, we add some edges between the vertices of $A_r$ and $A_s$ in such a way that any two not adjacent vertices of $A_r$ will have a common neighbor in $V(A_s)$, and that any two not adjacent vertices of $A_s$ will have a common neighbor in $V(A_r)$.

We also consider the family $\mathcal{B}$ of graphs $G_B$ constructed as follows. We begin with two graphs $B$ and $B'$ satisfying that $|\diam(B)-\diam(B')|\le 2$. Next, for any two not adjacent vertices $x,y\in V(B)$ with $d_B(x,y)=d$, we  select vertices $x',y'\in V(B')$ such that $d_{B'}(x',y')=d-2$, and add the edges $xx'$ and $yy'$. (Note that if $d=2$, then $x' = y'$.) Similarly, for any two not adjacent vertices $x',y'\in V(B')$ with $d_{B'}(x',y')=d$, we  select two vertices $x,y\in V(B)$ such that $d_B(x,y)=d-2$, and add the edges $xx'$ and $yy'$.

By inspection we can check that the graphs $G_k$ of Proposition~\ref{prop:mu-and-chimu-of-Gk} belong to the family $\mathcal{B}$. More generally, we have the following result.

\begin{proposition}
\label{prop:family_F}
If $G\in \mathcal{A}\cup \mathcal{B}$, then $\chimu(G)=2$.
\end{proposition}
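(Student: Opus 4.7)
The plan is to exhibit, in each case, an explicit partition of $V(G)$ into two mutual-visibility sets, yielding $\chimu(G) \le 2$, and to combine this with the observation that $G$ is not complete (so $\chimu(G) \ge 2$) to conclude equality.

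For $G \in \mathcal{A}$, I would use the partition $\{V(A_r) \cup V(K_r),\, V(A_s) \cup V(K_s)\}$ and verify by cases that each part is a mutual-visibility set. Within $V(A_r) \cup V(K_r)$: any two vertices of $V(K_r)$ are adjacent, any pair consisting of a vertex of $V(A_r)$ and one of $V(K_r)$ is adjacent by construction, and two adjacent vertices of $V(A_r)$ are trivially visible. The only remaining case is a pair of non-adjacent vertices $u, v \in V(A_r)$: since both are joined to every vertex of $V(K_r)$, one has $d_G(u,v) = 2$, and the defining property of $\mathcal{A}$ yields a common neighbor $w \in V(A_s)$, which lies in the opposite color class. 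Hence $u$-$w$-$v$ is a shortest path whose interior lies outside $V(A_r) \cup V(K_r)$. The symmetric argument handles the other part.

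For $G \in \mathcal{B}$, I would use the partition $\{V(B),\, V(B')\}$. For a non-adjacent pair $u, v \in V(B)$ with $d_B(u,v) = d \ge 2$, the construction furnishes $u', v' \in V(B')$ with added edges $uu'$, $vv'$ and $d_{B'}(u', v') = d - 2$; splicing these with a shortest $u', v'$-path in $B'$ yields a $u, v$-walk in $G_B$ of length $d$ whose interior is entirely in $V(B')$. A completely symmetric witness exists for non-adjacent pairs in $V(B')$.

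The main obstacle is showing that the above walk is in fact a shortest $u, v$-path in $G_B$. This is automatic when $d_{G_B}(u,v) = d_B(u,v)$; the delicate case is a shortcut $d_{G_B}(u,v) < d_B(u,v)$ produced by added edges. Here I would prove the stronger claim that there always exists a shortest $u, v$-path in $G_B$ whose interior lies entirely in $V(B')$. Since every edge inside $V(B)$ is a $B$-edge while added edges straddle $V(B)$ and $V(B')$, any shortcut path must visit $V(B')$. A surgery argument then replaces each $V(B)$-interior vertex of a shortest path by a $V(B')$-partner, using the matched pairing of added edges together with the diameter-compatibility $|\diam(B) - \diam(B')| \le 2$ to preserve length. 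Combined with $\chimu(G) \ge 2$ (since $G$ is not complete in either family; e.g., in the $\mathcal{A}$-case the vertices of $K_r$ and $K_s$ are non-adjacent), this yields $\chimu(G) = 2$.
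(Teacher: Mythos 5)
Your proof uses exactly the same two partitions as the paper --- $\{V(A_r)\cup V(K_r),\,V(A_s)\cup V(K_s)\}$ for family $\mathcal{A}$ and $\{V(B),V(B')\}$ for family $\mathcal{B}$ --- together with the same case analysis for visibility, so it is essentially the paper's proof. The one point where you go beyond it is in flagging, for family $\mathcal{B}$, that the length-$d$ walk $x,x',\ldots,y',y$ must be certified to be a \emph{shortest} $x,y$-path in $G_B$ (the added edges could in principle shorten distances); the paper's proof passes over this in silence, and while your proposed surgery argument is only sketched, acknowledging the issue is if anything more careful than the published argument.
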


\begin{proof}
Assume $G=G_A\in \mathcal{A}$. According to the construction of $G_A$, we can readily see that any two vertices of the set $S=V(A_r)\cup V(K_r)$ are $S$-visible, since they are either adjacent or at distance two (in such case $x,y\in V(A_r)$). In the latter case, by the construction, any two not adjacent vertices of $V(A_r)$ have a common neighbor  $z\in V(A_s)$ and $z\notin S$. Thus, $S$ is a mutual-visibility set of $G$. Similarly, the set $S'=V(B_s)\cup V(K_s)$ is also a mutual-visibility set of $G$. These facts allow to conclude that $\chimu(G_A)=2$.

Assume next that $G=G_B\in \mathcal{B}$. In such situation, we claim that each set $V(B)$ and $V(B')$ are mutual-visibility sets of $G_B$. Notice that for any two not adjacent vertices $x,y\in V(B)$ with $d_B(x,y)=d\ge 2$ there is a shortest $x,y$-path whose internal vertices are in $V(B')$ according to the construction of $G_B$. Thus, $x,y$ are $V(B)$-visible and so, $V(B)$ is a mutual-visibility set of $G_B$ as claimed. The same conclusion can be readily deduced for $V(B')$. Thus, $\chimu(G_B)=2$.
\end{proof}

Another infinite family of graphs $G$ satisfying $\chimu(G)=2$ is  given by the strong product graphs $H\boxtimes K_2$ for any non-complete graph $H$. From the proof of~\cite[Theorem 5.1]{DiStefano} one can deduce that if $V(K_2)=\{u,v\}$, then the two sets $V(H)\times \{u\}$ and  $V(H)\times \{v\}$ are mutual-visibility set of $H\boxtimes K_2$, and so, $\chimu(H\boxtimes K_2)\le 2$. The equality follows because $H$ is not a complete graph. Yet another family of graphs $G$ satisfying that $\chimu(G)=2$ is obtained from the last comments of Section \ref{sec:corona}. That is, for any corona graph $K_n\odot K_m$ ($n\ge 2$ and $m\ge 1$) it holds that $\chimu(K_n\odot K_m)=2$.

The comments of the paragraph above and the results from Proposition~\ref{prop:family_F} allow to observe that there are graphs $G$ satisfying that  $\chimu(G)=2$ and whose structures are intrinsically different. Note that the graphs $G_A\in \mathcal{A}$ satisfy that $\diam(G_A)\le 3$, while the graphs $G_B\in \mathcal{B}$ might have diameter as large as we would require.

%%%%%%%%%%%%%%%%%%%%%%%%%%%%%%%%%%%%%%%%%%%%%%%%%%%%%%%%
\section{Concluding remarks}
\label{sec:conclud}
%%%%%%%%%%%%%%%%%%%%%%%%%%%%%%%%%%%%%%%%%%%%%%%%%%%%%%%%

In this section we propose some open problems and suggestions that might be taken into account for further investigation of the concept introduced in this paper. First, the following is a fundamental question to be investigated.

\begin{problem}
Investigate the computational complexity of computing the mutual-visibility chromatic number.
\end{problem}

In Section \ref{sec:diameter2}, the value of the $\chimu(G)$ is computed (asymptotically) for the case of 2-dimensional Hamming graphs. It is already known that finding the mutual-visibility number of Hamming graphs is a very challenging problem (see for instance \cite{Cicerone-2023}). In this sense, the following problem might be also challenging, but worth of exploring.

\begin{problem}
Compute or at least bound the mutual-visibility chromatic number of Hamming graphs, and of the Cartesian product of at least two complete graphs in general.
\end{problem}

In view of Proposition~\ref{pro:chimu-chi}, and due to the fact that the chromatic number of graphs is one of the most classical parameters in graph theory, we pose the following problem.

\begin{problem}
Characterize the graphs $G$ of diameter two for which $\chimu(G) = \chi(G)$ holds. In particular, are there any additional such graphs besides the complete bipartite graphs? In addition, are there some other relationships between $\chimu(G)$ and $\chi(G)$ for other graph classes?
\end{problem}

Axenovich and Liu~\cite{Axenovich-2024} proved that $\mu(Q_n) > 0.186 \cdot 2^n$ for any hypercube graph $Q_n$. This raises the following:

\begin{question}
Does there exist an absolute constant $M$ such that $\chimu(Q_n) < M$ holds for any integer $n$?
\end{question}

\iffalse
Corollary \ref{coro:nordhaus} shows that the sum of the mutual-visibility chromatic numbers of a graph $G$ and its complements (when both graphs are connected) is about one half the order of $G$. This suggest the existence of several graphs $G$ for which $\chimu(G)\le n(G)/4$. Thus, the following question might be considered.

\begin{question}
Are there infinite families of graphs $G$ satisfying that $\chimu(G)\le n(G)/4$?
\end{question}
\fi

Section \ref{sec:chimu2} deals the graphs $G$ satisfying that $\chimu(G) = 2$ and Corollary \ref{coro:nordhaus} shows a Nordhauss-Gaddum type result for $\chimu(G)$. In this sense, it seems to be worth of considering the following.

\begin{problem}
Characterize the graphs $G$ (at least partially) satisfying that $\chimu(G) = 2$, as well as, those connected graphs $G$ whose complements are also connected, and for which $\chimu(G)+\chimu(\overline{G}) = \left\lceil \frac{n(G)+5}{2} \right\rceil$.
\end{problem}

A variety of mutual-visibility problems in graph theory (variants called total, outer and dual) was described in \cite{variety-2023}. In this sense, it seems to be natural to consider the problem of coloring the vertices of a graph with total, outer or dual mutual-visibility sets. Analogously, closely related to the mutual-visibility sets of graphs there exist the general position sets, see~\cite{irsic-2024, KorzeVesel, Manuel-2018, tian-2023, yao-2022}. Hence, coloring the vertices of a graph with general position sets is also of interest.

%%%%%%%%%%%%%%%%%%%%%%%%%%%%%%%%%%%%%%%%%%%%%%%%%%%%%%%%
\section*{Acknowledgements}
%%%%%%%%%%%%%%%%%%%%%%%%%%%%%%%%%%%%%%%%%%%%%%%%%%%%%%%%

We may remark that this investigation was in part motivated by an informal discussion with Jarek Grytczuk (Cracow, Poland) at the $8^{\rm th}$ Gda\'nsk Workshop on Graph Theory, 2023, where I.\ G.\ Yero presented a talk on some mutual-visibility problems in graphs.

S.\ Klav\v{z}ar has been supported by the Slovenian Research Agency ARIS (research core funding P1-0297 and projects N1-0285, N1-0218). D.\ Kuziak and I.\ G.\ Yero have been partially supported by the Spanish Ministry of Science and Innovation through the grant PID2023-146643NB-I00. J.\ C.\ Valenzuela has been partially supported by the Spanish Ministry of Science and Innovation through the grant PID2022-139543OB-C41. Moreover, this investigation was completed while D.\ Kuziak was visiting the University of Ljubljana supported by “Ministerio de Educaci\'on, Cultura y Deporte”, Spain, under the “Jos\'e Castillejo” program for young researchers (reference number: CAS22/00081).

\section*{Author contributions statement}
All authors contributed equally to this work.

\section*{Conflicts of interest}
The authors declare no conflict of interest.

\section*{Data availability}
No data was used in this investigation.

\end{document}